\newtheorem{theorem}{Theorem}[section]
\newtheorem{proposition}[theorem]{Proposition}
\newtheorem{lemma}[theorem]{Lemma}
\newtheorem{corollary}[theorem]{Corollary}
\theoremstyle{definition}
\newtheorem{remark}[theorem]{Remark}
\numberwithin{equation}{section}
\begin{document}

\title[A canonical connection on bundles on Riemann surfaces]{A canonical connection on bundles
on Riemann surfaces and Quillen connection on the theta bundle}

\author[I. Biswas]{Indranil Biswas}

\address{School of Mathematics, Tata Institute of Fundamental Research,
Homi Bhabha Road, Mumbai 400005}

\email{indranil@math.tifr.res.in}

\author[J. Hurtubise]{Jacques Hurtubise}

\address{Department of Mathematics, McGill University, Burnside
Hall, 805 Sherbrooke St. W., Montreal, Que. H3A 2K6, Canada}

\email{jacques.hurtubise@mcgill.ca}

\subjclass[2010]{14H60, 14D21}

\keywords{Moduli space, theta bundle, holomorphic connections, Quillen connection}

\date{}

\begin{abstract}
We investigate the symplectic geometric and also the differential geometric aspects of the
moduli space of connections on a compact connected Riemann surface $X$. Fix a theta characteristic
$K^{1/2}_X$ on $X$; it defines a theta divisor on the moduli space ${\mathcal M}$ of
stable vector bundles on $X$ of rank $r$ degree zero.
Given a vector bundle $E\, \in\, {\mathcal M}$ lying outside
the theta divisor, we construct a natural holomorphic connection on $E$ that depends holomorphically
on $E$. Using this holomorphic connection, we construct a canonical holomorphic isomorphism between
the following two:
\begin{enumerate}
\item the moduli space $\mathcal C$ of pairs $(E,\, D)$, where $E\, \in\, {\mathcal M}$ and $D$ is a
holomorphic connection on $E$, and

\item the space ${\rm Conn}(\Theta)$ given by the sheaf of holomorphic connections on the line bundle
on $\mathcal M$ associated to the theta divisor.
\end{enumerate}
The above isomorphism between $\mathcal C$ and ${\rm Conn}(\Theta)$ is symplectic structure preserving, and
it moves holomorphically as $X$ runs over a holomorphic family of Riemann surfaces.
\end{abstract}

\maketitle

\tableofcontents

\section{Introduction}\label{sec0}

Let $X$ be a compact connected Riemann surface of genus at least two. Let $K^{1/2}_X$ be a square-root
of the canonical line bundle $K_X$ of $X$; it is called a theta characteristic of $X$. Let $\mathcal M$ denote
the moduli space of stable vector bundles on $X$ of rank $r$ and degree zero. It has the theta divisor
$D_\Theta$ defined
by all $E$ such that $H^0(X, \, E\otimes K^{1/2}_X)\, \not=\, 0$; the holomorphic
line bundle on $\mathcal M$ corresponding to the divisor
$D_\Theta$ is denoted by $\Theta$. The moduli space $\mathcal M$ has a natural K\"ahler
structure. The K\"ahler $2$-form on $\mathcal M$ coincides with the symplectic form on the ${\rm U}(r)$
character variety for $X$, \cite{Go}, \cite{AB}, once we identify this character variety with
$\mathcal M$ using \cite{NS} (see the map $\psi_U$ below).

Let $\mathcal C$ denote the moduli space of holomorphic connections on $X$ of rank $r$ such that the underlying
holomorphic vector bundle is stable; it projects to $\mathcal M$ by mapping elements to the
underlying holomorphic vector bundle. This $\mathcal C$ is a holomorphic torsor on $\mathcal M$ for the
holomorphic cotangent bundle $T^*\mathcal M$ (this means that the fibers of $T^*\mathcal M$ act freely
transitively on the fibers of $\mathcal C$ over $\mathcal M$). This moduli space $\mathcal C$ is equipped
with a natural holomorphic symplectic structure \cite{Go}, \cite{AB}. There is a natural $C^\infty$ section
$$
\psi_U\, :\, {\mathcal M}\, \longrightarrow\, \mathcal C
$$
that sends any $E\, \in\, {\mathcal M}$ to the unique unitary flat connection on $E$ \cite{NS}.

Let $\text{Conn}(\Theta)$ denote the holomorphic fiber bundle on ${\mathcal M}$ given by the
sheaf of holomorphic connections on the line bundle $\Theta$. There is a tautological holomorphic
connection on the pullback of $\Theta$ to $\text{Conn}(\Theta)$. The curvature of this tautological holomorphic
connection is a holomorphic symplectic form on $\text{Conn}(\Theta)$. This $\text{Conn}(\Theta)$ is also
a holomorphic torsor on $\mathcal M$ for $T^*\mathcal M$. Although the line bundle $\Theta$
depends on the choice of the theta characteristic $K^{1/2}_X$, the $T^*\mathcal M$--torsor
$\text{Conn}(\Theta)$ does not depend on the choice of the theta characteristic (see Remark \ref{rei}).

There is a unique Hermitian connection on
$\Theta$ whose curvature is the K\"ahler form on $\mathcal M$ \cite{Qu}. Let
$$
\psi_Q\, :\, {\mathcal M}\, \longrightarrow\, \text{Conn}(\Theta)
$$
be the corresponding $C^\infty$ section of the projection $\text{Conn}(\Theta)\, \longrightarrow\,\mathcal M$.

Since both $\mathcal C$ and $\text{Conn}(\Theta)$ are torsors over $\mathcal M$ for $T^*\mathcal M$, and they
are equipped with the $C^\infty$ sections $\psi_U$ and $\psi_Q$ respectively, there is a unique
$C^\infty$ isomorphism
$$
F\, :\, {\mathcal C} \, \longrightarrow\, \text{Conn}(\Theta)
$$
satisfying the following two conditions:
\begin{itemize}
\item $F$ takes the section $\psi_U$ to $\psi_Q$, and

\item $F$ preserves the $T^*\mathcal M$--torsor structure up to the multiplicative factor $2r$, meaning
$F(E,\, D+v)\,=\, F(E,\,D)+2r\cdot v$, where $E\, \in\, \mathcal M$ with $D$ a holomorphic connection on $E$
and $v\, \in\, T^*_E {\mathcal M}\,=\, H^0(X,\, \text{End}(E)\otimes K_X)$.
\end{itemize}

The following was proved in \cite{BH} (recalled here in Theorem \ref{thm1}):

\textit{The above isomorphism $F$ is holomorphic, and it preserves the holomorphic symplectic forms
up to the factor $2r$, meaning the pullback, by $F$, of the holomorphic symplectic form on $\text{Conn}(\Theta)$
coincides with $2r$ times the holomorphic symplectic form on $\mathcal C$.}

Take any holomorphic vector bundle $E\, \in\, {\mathcal M}$ such that $H^0(X, \, E\otimes K^{1/2}_X)\, =\, 0$ (so $E$ lies 
outside the theta divisor $D_\Theta$). We construct a natural holomorphic connection on $E$; see Section \ref{se3.1}.
Unlike the unitary connection, it moves holomorphically as $E$ moves in a holomorphic family of vector bundles. In 
fact, this connection moves holomorphically as the pair $(X,\, E)$ moves in a holomorphic family. Let
$$
\phi\, :\, {\mathcal M}\setminus D_\Theta \, \longrightarrow\,{\mathcal C}\big\vert_{{\mathcal M}\setminus D_\Theta}
$$
be the holomorphic section given by this natural holomorphic connection.

The holomorphic line $\Theta$ has a canonical trivialization outside the theta divisor $D_\Theta$. This
trivialization produces a holomorphic section of the fiber bundle $\text{Conn}(\Theta) \, \longrightarrow\,
\mathcal M$ outside $D_\Theta$. Let
$$
\tau\, :\, {\mathcal M}\setminus D_\Theta\, \longrightarrow\,\text{Conn}(\Theta)\big\vert_{{\mathcal M}\setminus
D_\Theta}
$$
be the section given by this canonical trivialization. Unlike the section $\psi_Q$, this section $\tau$ is holomorphic.

Since both ${\mathcal C}\big\vert_{{\mathcal M}\setminus D_\Theta}$ and
$\text{Conn}(\Theta)\big\vert_{{\mathcal M}\setminus
D_\Theta}$ are torsors over $\mathcal M\setminus D_\Theta$ for the holomorphic cotangent bundle
$T^*({\mathcal M}\setminus D_\Theta)$, and $\phi$ and $\tau$ are holomorphic sections, there is a unique
holomorphic isomorphism
$$
G\, :\, {\mathcal C}\big\vert_{{\mathcal M}\setminus D_\Theta} \, \longrightarrow\,
\text{Conn}(\Theta)\big\vert_{{\mathcal M}\setminus D_\Theta}
$$
satisfying the following two conditions:
\begin{itemize}
\item $G$ takes the section $\phi$ to $\tau$, and

\item $G$ preserves the $T^*({\mathcal M}\setminus D_\Theta)$--torsor structures up to the multiplicative
factor $2r$; this means that $G(E,\, D+v)\,=\, G(E,\,D)+2r\cdot v$, where $E\, \in\, {\mathcal M}\setminus D_\Theta$,
$D$ is a holomorphic connection on $E$ and $v\, \in\, T^*_E {\mathcal M}\,=\, H^0(X,\, \text{End}(E)\otimes K_X)$.
\end{itemize}

Our main result says the following (see Theorem \ref{thm2}):

\begin{theorem}\label{thm0}
The above isomorphism $G$ coincides with the restriction of the isomorphism $F$ to the open subset
${\mathcal C}\big\vert_{{\mathcal M}\setminus D_\Theta}$.
\end{theorem}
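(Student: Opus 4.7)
The plan is to reduce the statement to a single identity of $(1,0)$-forms on $\mathcal{M}\setminus D_\Theta$, and then verify that identity.

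Both $F$ and $G$ are $T^*\mathcal{M}$-torsor isomorphisms that rescale the torsor action by the same factor $2r$. Hence the pointwise difference $F(E,D)-G(E,D)\in T^*_E\mathcal{M}$ is independent of $D$ and descends to a well-defined section $\sigma$ of $T^*\mathcal{M}$ on $\mathcal{M}\setminus D_\Theta$. Setting $\alpha\,:=\,\phi-\psi_U$ and using $F(\psi_U)=\psi_Q$ together with the scaling property of $F$,
\[
F(\phi)\;=\;F(\psi_U)+2r\alpha\;=\;\psi_Q+2r\alpha,
\]
so evaluating $\sigma$ at $D=\phi(E)$ gives $\sigma\,=\,F(\phi)-\tau\,=\,2r(\phi-\psi_U)-(\tau-\psi_Q)$. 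Therefore Theorem~\ref{thm0} is equivalent to the identity
\[
\tau-\psi_Q\;=\;2r\,(\phi-\psi_U)
\]
of sections of $T^*\mathcal{M}$ on $\mathcal{M}\setminus D_\Theta$.

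Next I would interpret both sides concretely. The canonical trivialization of $\Theta$ outside $D_\Theta$ is given by the theta section $\vartheta$, vanishing exactly on $D_\Theta$; in this trivialization $\tau$ acts as $d$, while the Quillen connection $\psi_Q$, being the Chern connection of the Quillen metric $\|\cdot\|_Q$, acts as $d+\partial\log\|\vartheta\|_Q^2$. Consequently $\tau-\psi_Q=-\partial\log\|\vartheta\|_Q^2$ as a $(1,0)$-form on $\mathcal{M}\setminus D_\Theta$. On the other side, $(\phi-\psi_U)(E)$ is an element of $H^0(X,\operatorname{End}(E)\otimes K_X)=T^*_E\mathcal{M}$, and its pairing with a Kodaira--Spencer class $\xi\in H^1(X,\operatorname{End}(E))=T_E\mathcal{M}$ is given by Serre duality on $X$.

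The core of the proof is therefore to show that, for every $E\in\mathcal{M}\setminus D_\Theta$ and every deformation direction $\xi$, the directional derivative of $-\log\|\vartheta\|_Q^2$ along $\xi$ equals $2r$ times $\langle(\phi-\psi_U)(E),\xi\rangle$. Outside $D_\Theta$ the operator $\bar\partial_{E\otimes K_X^{1/2}}$ is invertible, so $\|\vartheta(E)\|_Q^2$ is, up to known universal factors, the $\zeta$-regularized determinant of $\bar\partial^{\,*}\bar\partial$; its first variation along $\xi$ is, by the standard variational formula for Quillen metrics, a trace involving the Green's operator of $\bar\partial_{E\otimes K_X^{1/2}}$. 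The natural holomorphic connection $\phi$ of Section~\ref{se3.1} is constructed from this very same Green's operator, so that the Serre pairing $\langle(\phi-\psi_U)(E),\xi\rangle$ reproduces that trace, up to the factor $r$ reflecting that $\Theta$ is the determinant line bundle associated to the rank-$r$ bundle $E\otimes K_X^{1/2}$ and the factor $2$ passing between a $(1,1)$-curvature and its $(1,0)$-potential. I expect the main obstacle to be bookkeeping: fixing all normalizations in the Section~\ref{se3.1} construction and invoking the correct form of the Bismut--Gillet--Soul\'e variational formula for Quillen metrics in this rank-$r$, genus-$g$ setting.
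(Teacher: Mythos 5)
Your reduction is sound and in fact coincides with a step the paper itself takes: the difference $F-G$ descends to a $(1,0)$--form on ${\mathcal M}\setminus D_\Theta$ equal to $2r(\phi-\psi_U)-(\tau-\psi_Q)$ (this is the paper's identity \eqref{fi}), so the theorem is equivalent to $\tau-\psi_Q=2r(\phi-\psi_U)$. Where you diverge is in how this identity is established, and this is where your proposal has a genuine gap. The entire content of the theorem is concentrated in your sentence asserting that the Serre pairing of $(\phi-\psi_U)(E)$ with a Kodaira--Spencer class ``reproduces that trace, up to the factor $r$ \dots and the factor $2$''; no argument is given for it, and it is not a routine consequence of the Quillen/Bismut--Gillet--Soul\'e machinery. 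Those variational formulas control the \emph{curvature} $\bar\partial\partial\log\Vert\cdot\Vert_Q^2$ (a $(1,1)$--form, which is how one knows $\psi_Q$ has curvature $\omega_{\mathcal M}$); they do not by themselves produce the $(1,0)$--derivative $\partial\log\Vert\vartheta\Vert_Q^2$ of the Quillen norm of the canonical trivializing section, which is what you need. Bridging that gap requires a holomorphic factorization of the determinant in terms of the Szeg\H{o} kernel (Fay-type formulas, cf.\ \cite{BB}), together with the nontrivial fact that the second-order jet of the kernel $\beta_E$ along the diagonal --- which is what defines $\phi$ --- computes the logarithmic derivative of that factorization relative to the unitary connection. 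None of this bookkeeping is ``standard''; as written, the core step is a restatement of the theorem rather than a proof of it.

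For contrast, the paper proves $2r(\phi-\psi_U)-(\tau-\psi_Q)=0$ without any analytic computation, by a rigidity argument: it shows the difference $\Gamma$ extends to a section of $(T^*{\mathcal M})\otimes\Theta$ (a pole of order at most one along $D_\Theta$, obtained from a global construction of $\phi$ via the direct image sequence \eqref{s1} and the fact that $\tau$ is a logarithmic connection); it then kills the trace-free component of any such section by restricting to the Hecke projective spaces ${\mathbb F}_z\cong{\mathbb C}{\mathbb P}^{r-1}$ inside $\mathcal M$ (Proposition \ref{prop2}), leaving only a $1$--form pulled back from $J(X)$; and finally it shows that this $1$--form is invariant under $L\mapsto L^*$, whereas every holomorphic $1$--form on $J(X)$ is anti-invariant, forcing it to vanish. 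If you want to pursue your direct route, you would essentially be reproving the Szeg\H{o}-kernel description of the Quillen connection; the paper's argument deliberately sidesteps exactly that computation.
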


Theorem \ref{thm0} has the following consequence (see Corollary \ref{cor0}):

\begin{corollary}\label{cor-i}
The above holomorphic isomorphism $G$ extends to a holomorphic isomorphism
$$G'\, :\, {\mathcal C}\, \stackrel{\sim}{\longrightarrow}\, {\rm Conn}(\Theta)$$
over entire ${\mathcal M}$.
\end{corollary}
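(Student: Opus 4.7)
The plan is simply to set $G' := F$, where $F\colon \mathcal{C} \to \text{Conn}(\Theta)$ is the biholomorphism of Theorem \ref{thm1}, and then verify that this map extends $G$ and is unique with that property.

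By Theorem \ref{thm1} (recalled from \cite{BH}), $F$ is already a holomorphic isomorphism of $\mathcal{C}$ onto $\text{Conn}(\Theta)$ defined over the whole of $\mathcal{M}$, including over the theta divisor $D_\Theta$. Consequently $G':=F$ is automatically a holomorphic isomorphism on all of $\mathcal{M}$, and the only thing to check is that its restriction to $\mathcal{C}\big|_{\mathcal{M}\setminus D_\Theta}$ coincides with $G$. But this restriction statement is exactly the content of Theorem \ref{thm0}. Thus $G'$ is the required holomorphic extension of $G$.

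For uniqueness, I would note that $D_\Theta$ is a proper effective divisor in the smooth irreducible variety $\mathcal{M}$, so its complement is dense; any holomorphic extension of $G$ must then agree with $G'=F$ on a dense open set and hence everywhere. The main obstacle in this chain of reasoning has already been overcome in Theorem \ref{thm0}: once the equality $F|_{\mathcal{M}\setminus D_\Theta}=G$ is in hand, the corollary demands no further work, because $F$ is holomorphic across the theta divisor by the earlier result of \cite{BH}. The corollary is in essence a regularity statement: the construction of $G$ in terms of the holomorphic sections $\phi$ and $\tau$ breaks down a priori on $D_\Theta$ (since both sections are defined only on $\mathcal{M}\setminus D_\Theta$), yet the alternative description as $F$ shows that the map itself extends without singularity to the whole moduli space.
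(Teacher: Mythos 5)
Your proposal is correct and follows exactly the paper's own argument: the paper proves Corollary \ref{cor0} by observing that $F$ is a holomorphic isomorphism on all of $\mathcal{C}$ (Theorem \ref{thm1}) and that its restriction to $\mathcal{C}^0$ equals $G$ (Theorem \ref{thm2}), so $G':=F$ is the desired extension. Your added remark on uniqueness via density of $\mathcal{M}\setminus D_\Theta$ is a harmless supplement not present in (and not needed for) the paper's proof.
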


\begin{remark}\label{rem-i}
We note that the isomorphism $G$ in Theorem \ref{thm0} is constructed purely algebro-geometrically.
Hence the construction of its closure $G'$ in Corollary \ref{cor-i} is
purely algebro-geometric. On the other hand, the two $C^\infty$ sections $\psi_U$ and $\psi_Q$
mentioned earlier are not algebro-geometric. Theorem \ref{thm0} implies that given the
input of the algebro-geometric isomorphism $G'$, any one of the two sections
$\psi_U$ and $\psi_Q$ determines the other uniquely.
\end{remark}

As mentioned before, both ${\mathcal C}$ and $\text{Conn}(\Theta)$ are equipped with
holomorphic symplectic structures. Let $\Phi_1$ and $\Phi_2$ denote the holomorphic symplectic
forms on ${\mathcal C}$ and $\text{Conn}(\Theta)$ respectively.
We prove the following relationship between these two symplectic forms (see Corollary \ref{cor2}):

\begin{corollary}\label{cor-i2}
For the isomorphism $G'$ in Corollary \ref{cor-i},
$$(G')^*\Phi_2\,=\, 2r\cdot\Phi_1\, .$$
\end{corollary}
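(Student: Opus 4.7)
The plan is to derive Corollary \ref{cor-i2} as an essentially formal consequence of Theorem \ref{thm0}, Theorem \ref{thm1}, and uniqueness of analytic continuation, without performing any fresh symplectic computation.

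First I would argue that the extension $G'$ furnished by Corollary \ref{cor-i} must coincide, as a map of complex manifolds, with the isomorphism $F$ of Theorem \ref{thm1}. Indeed, by Theorem \ref{thm0}, the restriction of $F$ to $\mathcal{C}|_{\mathcal{M}\setminus D_\Theta}$ equals $G$; hence $F$ is one holomorphic extension of $G$ from the open dense subset $\mathcal{C}|_{\mathcal{M}\setminus D_\Theta}\subset \mathcal{C}$ to all of $\mathcal{C}$. Since $D_\Theta$ is a proper analytic subvariety of the (connected) moduli space $\mathcal{M}$, its preimage in $\mathcal{C}$ is also a proper analytic subvariety, so any two holomorphic maps from $\mathcal{C}$ to $\text{Conn}(\Theta)$ that agree on $\mathcal{C}|_{\mathcal{M}\setminus D_\Theta}$ agree globally. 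Therefore $G'=F$ on all of $\mathcal{C}$.

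Next, I would simply invoke Theorem \ref{thm1}, which states that $F^*\Phi_2 = 2r\cdot \Phi_1$. Combined with the identification $G'=F$, this immediately yields
$$
(G')^*\Phi_2\,=\, F^*\Phi_2\,=\, 2r\cdot\Phi_1\, ,
$$
as desired.

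If one prefers to avoid the identification $G'=F$ as maps and instead compare the forms directly, the same reasoning works at the level of $2$-forms: both $(G')^*\Phi_2$ and $2r\cdot \Phi_1$ are holomorphic $2$-forms on $\mathcal{C}$, and on the open dense subset $\mathcal{C}|_{\mathcal{M}\setminus D_\Theta}$ their equality follows from Theorem \ref{thm0} combined with Theorem \ref{thm1}. The identity principle for holomorphic sections of $\Omega^2_{\mathcal{C}}$ then propagates the equality to all of $\mathcal{C}$. There is no real obstacle here; the content of the corollary lies entirely in Theorem \ref{thm0} and Theorem \ref{thm1}, and the corollary is their formal amalgamation.
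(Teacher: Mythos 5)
Your proposal is correct and matches the paper's own argument: the extension $G'$ in Corollary \ref{cor-i} is precisely the isomorphism $F$ of Theorem \ref{thm1} (the paper constructs $G'$ as $F$, and your identity-principle argument shows this is forced), so the identity $(G')^*\Phi_2 = 2r\cdot\Phi_1$ follows at once from the final assertion of Theorem \ref{thm1}. The extra remarks about propagating the equality of $2$-forms by analytic continuation are harmless but not needed once $G'=F$ is established.
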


Both $\phi$ and $\tau$ move holomorphically as $X$ moves in a holomorphic family of Riemann surfaces.
Therefore, Theorem \ref{thm0} has the following consequence (see Proposition \ref{prop3}):

\begin{proposition}\label{pl1}
The isomorphism $F$ moves holomorphically as $X$ moves in a holomorphic family of Riemann surfaces.
\end{proposition}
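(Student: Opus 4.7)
The plan is to deduce the proposition from Theorem \ref{thm0} together with the already-noted holomorphic dependence of $\phi$ and $\tau$ on the Riemann surface, and then to cross the relative theta divisor by the Riemann extension theorem. Given a holomorphic family $p\colon\mathcal{X}\to\mathcal{S}$ of compact connected Riemann surfaces of genus at least two, I would form the relative analogues of the objects in the introduction: $\mathcal{M}_\mathcal{S}\to\mathcal{S}$, $\mathcal{C}_\mathcal{S}\to\mathcal{M}_\mathcal{S}$, $D_{\Theta,\mathcal{S}}\subset\mathcal{M}_\mathcal{S}$ and $\text{Conn}(\Theta)_\mathcal{S}\to\mathcal{M}_\mathcal{S}$. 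The fiberwise isomorphisms $F_s$ assemble into a single map $F_\mathcal{S}\colon \mathcal{C}_\mathcal{S}\to\text{Conn}(\Theta)_\mathcal{S}$, and the goal is to show that $F_\mathcal{S}$ is holomorphic.

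By the stated holomorphic behavior in families, the sections $\phi$ and $\tau$ yield holomorphic relative sections $\phi_\mathcal{S}$ and $\tau_\mathcal{S}$ over $\mathcal{M}_\mathcal{S}\setminus D_{\Theta,\mathcal{S}}$. Because $G$ is defined from $\phi$, $\tau$, and the $T^*$-torsor structure by a purely algebraic prescription — the unique torsor isomorphism, rescaled by $2r$, taking $\phi$ to $\tau$ — performing the construction in the family yields a holomorphic relative isomorphism $G_\mathcal{S}$ between the restrictions of $\mathcal{C}_\mathcal{S}$ and $\text{Conn}(\Theta)_\mathcal{S}$ to $\mathcal{M}_\mathcal{S}\setminus D_{\Theta,\mathcal{S}}$. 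Applying Theorem \ref{thm0} fiberwise, $F_\mathcal{S}$ and $G_\mathcal{S}$ coincide on this open subset; hence $F_\mathcal{S}$ is holomorphic on the complement of the relative theta divisor.

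To extend across $D_{\Theta,\mathcal{S}}$ I would invoke the Riemann extension theorem in local coordinates on the target: a continuous map between complex manifolds that is holomorphic off an analytic hypersurface is automatically holomorphic everywhere. The remaining task is therefore to verify continuity of $F_\mathcal{S}$ across the relative theta divisor. This reduces to continuity (indeed smoothness) in families of the defining sections $\psi_U$ and $\psi_Q$, together with continuity of the $T^*$-torsor operation — the latter being automatic from the algebraic construction.

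The main obstacle I expect is establishing this continuity of $\psi_U$ and $\psi_Q$ in families with sufficient uniformity near the theta divisor. For $\psi_U$ one uses the continuous dependence on the complex structure of the Narasimhan--Seshadri correspondence; for $\psi_Q$ one uses the fact that Quillen's construction produces a smooth family of Hermitian metrics on the determinant bundle, and hence of Chern connections, as $X$ varies. Both are standard but must be spelled out carefully in the relative setting. Once these regularity statements are in hand, the extension across $D_{\Theta,\mathcal{S}}$ is immediate and the proposition follows from Theorem \ref{thm0}.
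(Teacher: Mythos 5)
Your argument is correct and follows essentially the same route as the paper: both proofs reduce the holomorphicity of the family version of $F$ to the holomorphic dependence of the sections $\phi$ and $\tau$ on the Riemann surface, assemble the resulting relative isomorphism $G$ over the complement of the relative theta divisor, and invoke Theorem \ref{thm0} to identify it with $F$ there. The only divergence is in how the relative theta divisor is crossed. The paper does this by taking, fiber by fiber, the extension $G'_t$ of Corollary \ref{cor0} and asserting that the resulting family $\widehat{G}'$ is still holomorphic; you instead observe that the family map $\widehat{F}$ of \eqref{wft} is continuous (indeed $C^\infty$, since the Narasimhan--Seshadri and Quillen sections $\widehat{\psi}_U$ and $\widehat{\psi}_Q$ of \eqref{hpu} and \eqref{hpq} vary smoothly in the family) and apply the Riemann removable singularity theorem along the pullback of the relative theta divisor, which is an analytic hypersurface in the relative moduli space of connections. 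Your version makes explicit a point the paper leaves somewhat implicit, namely why holomorphicity of the extension persists in the family direction; the price is reliance on the smoothness of $\widehat{\psi}_U$ and $\widehat{\psi}_Q$ in families, but the paper needs this anyway just to define $\widehat{F}$ as a $C^\infty$ map, so nothing extra is being assumed. One small remark: the ``uniformity near the theta divisor'' that you flag as the main obstacle is not actually an issue --- $\widehat{\psi}_U$ and $\widehat{\psi}_Q$ are smooth sections over the whole of the relative moduli space, with no special behaviour at $D_\Theta$, so the continuity of $\widehat{F}$ across the divisor comes for free once the smooth family versions of these sections are in place.
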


It may be mentioned that Proposition \ref{pl1} not immediate from the isomorphism of \cite{BH}.

This is particularly useful as one of the questions inspiring this investigation is the omnipresence of the 
determinantal line in questions involving deformations of connections, that is isomonodromy; this manifests itself 
in the role of tau-functions. The role of this line is somewhat surprising; it is as if in a linear algebra problem, 
the main issue was the determinant. Several papers have been devoted to this issue, notably by Malgrange \cite{Ma}. 
This paper and its predecessor \cite{BH} can be viewed as a further exploration of this issue; one is comparing one 
torsor ($\mathcal C$), defined over the moduli space in terms of connections on a full Riemann surface, and another 
($\text{Conn}$) which is simply the natural locus for connections on the determinant line; the first should contain 
much more information, but for certain things, it does not.

\section{Moduli space of stable vector bundles}

\subsection{Two torsors on a moduli space}

Let $X$ be a compact connected Riemann surface of genus $g$, with $g\, \geq\, 2$. We 
recall that a holomorphic vector bundle $V$ on $X$ is called stable if
$$\frac{\text{degree}(F)}{\text{rank}(F)}\, <\, \frac{\text{degree}(V)}{\text{rank}(V)}$$ for all 
holomorphic subbundles $F\, \subsetneq\, V$ of positive rank. This condition implies that any stable vector
bundle is simple. Fix a positive integer $r$. Let ${\mathcal M}$ denote the moduli space 
of stable vector bundles on $X$ of rank $r$ and degree zero (see \cite{Ne}, \cite{Si1} for the construction
of this moduli space).

The holomorphic cotangent bundle of $X$ will be denoted by $K_X$. A holomorphic connection
on a holomorphic vector bundle $E$ on $X$ is a holomorphic differential operator of order one
$$
{\mathcal D}_E\, :\, E\, \longrightarrow\, E\otimes K_X
$$
satisfying the Leibniz identity, which says that
$D(f\cdot s)\,=\, f\cdot D(s) + s\otimes df$, where $s$ is any locally
defined holomorphic section of $E$ and $f$ is any locally
defined holomorphic function on $X$ \cite{At}. Holomorphic connections on $X$ are flat because
there are no nonzero $(2,\, 0)$--forms on a Riemann surface.

Let $\mathcal C$ denote the moduli space of all holomorphic
connections on $X$ of rank $r$ such that the underlying holomorphic vector bundle is stable
\cite{Si1}, \cite{Si2}. In
other words, $\mathcal C$ parametrizes all isomorphism classes of pairs of the form $(E,\,
{\mathcal D}_E)$, where $E$ is a stable holomorphic vector bundle on $X$ of rank $r$ and degree
zero and ${\mathcal D}_E$ is a holomorphic connection on $E$.

Any indecomposable holomorphic
vector bundle on $X$ of degree zero admits a holomorphic connection \cite[p.~203, Proposition
19]{At}, \cite{We}, in particular, any stable vector bundle on $X$ of degree zero admits a
holomorphic connection. Let
\begin{equation}\label{e1}
\varphi\, :\, {\mathcal C}\, \longrightarrow\, \mathcal M\, , \ \
(E,\, {\mathcal D}_E)\,\longmapsto\, E
\end{equation}
be the forgetful map that forgets the holomorphic connection; as noted above,
the map $\varphi$ is surjective. Any two holomorphic
connections on $E\, \in\, \mathcal M$ differ by an element of $$H^0(X,\, \text{End}(E)\otimes
K_X)\,=\, T^*_E\mathcal M\, .$$ In fact, $\mathcal C$ is a holomorphic torsor over $\mathcal M$
for the holomorphic cotangent bundle $T^*\mathcal M$. This means that there is a holomorphic action
$$
\delta\, :\, {\mathcal C}\times_{\mathcal M} T^*\mathcal M\, \longrightarrow\, {\mathcal C}
$$
of $T^*\mathcal M$ on $\mathcal C$ such that the map of fiber products
$$
{\mathcal C}\times_{\mathcal M} T^*\mathcal M\, \longrightarrow\, {\mathcal C}\times_{\mathcal M}{\mathcal C}\, ,
\ \ (a,\, b)\, \longmapsto\, (\delta(a,\, b),\, a) 
$$
is an isomorphism.

Any stable holomorphic vector bundle on $X$ of degree zero admits a unique holomorphic 
connection whose monodromy representation is unitary \cite[p.~560--561, Theorem 2]{NS}. 
Therefore, the projection $\varphi$ in \eqref{e1} has a $C^\infty$ section
\begin{equation}\label{e2}
\psi_U\, :\, {\mathcal M}\, \longrightarrow\, \mathcal C
\end{equation}
that sends any stable vector bundle
$E\, \in\, \mathcal M$ to the unique holomorphic connection on $E$ whose monodromy
representation is unitary. This section $\psi_U$ is \textit{not} holomorphic.

There is a natural holomorphic symplectic form on $\mathcal C$
\begin{equation}\label{sf}
\Phi_1\, \in\, H^0({\mathcal C},\, \Omega^2_{\mathcal C})
\end{equation}
\cite{Go}, \cite{AB}. It is known that this holomorphic $2$--form $\Phi_1$ is algebraic \cite{Bi2}.

We shall now construct another holomorphic torsor over $\mathcal M$
for the holomorphic cotangent bundle $T^*\mathcal M$.

Fix a theta characteristic $K^{1/2}_X$ on $X$. So $K^{1/2}_X$ is a holomorphic line bundle
on $X$ of degree $g-1$ such that $K^{1/2}_X\otimes K^{1/2}_X$ is holomorphically isomorphic to the
holomorphic cotangent bundle $K_X$. Fix a holomorphic isomorphism of 
$K^{1/2}_X\otimes K^{1/2}_X$ with $K_X$. Let
\begin{equation}\label{td}
D_\Theta\, :=\, \{E\, \in\, \mathcal M\, \mid\, H^0(X,\, E\otimes K^{1/2}_X)\,\not=\,0\}
\, \subset\, \mathcal M
\end{equation}
be the theta divisor on $\mathcal M$ (see \cite{La}). Note that by Riemann--Roch we have
$$
\dim H^0(X,\, E\otimes K^{1/2}_X) - \dim H^1(X,\, E\otimes K^{1/2}_X)\,=\,
\text{degree}(E\otimes K^{1/2}_X)-r(g-1)
$$
$$
=\, r(g-1)-r(g-1)\,=\, 0\, .
$$
So $H^1(X,\, E\otimes K^{1/2}_X)\,\not=\, 0$ if and only if $E\, \in\, D_\Theta$.
The holomorphic line bundle ${\mathcal O}_{\mathcal M}
(D_\Theta)$ on $\mathcal M$ will be denoted by $\Theta$.

Let $\text{At}(\Theta)\,\longrightarrow\, \mathcal M$ be the Atiyah bundle for $\Theta$. It
fits in the short exact sequence of holomorphic vector bundles
\begin{equation}\label{ef4}
0\,\longrightarrow\, {\mathcal O}_{\mathcal M}\,\longrightarrow\, \text{At}(\Theta)
\,\longrightarrow\,T{\mathcal M} \,\longrightarrow\, 0
\end{equation}
over $\mathcal M$ (see \cite[p.~187, Theorem 1]{At}). For $i\, \geq\, 0$, let
${\rm Diff}^1_{\mathcal M}(\Theta,\, \Theta)$ be the holomorphic vector bundle
on ${\mathcal M}$ given by the sheaf of holomorphic differential operators from
$\Theta$ to itself. We note that
$\text{At}(\Theta)\,=\, {\rm Diff}^1_{\mathcal M}(\Theta,\, \Theta)$, and the exact
sequence in \eqref{ef4} coincides with the sequence
$$
0\,\longrightarrow\, {\rm Diff}^0_{\mathcal M}(\Theta,\, \Theta)\,=\, {\mathcal O}_{\mathcal M}\,\longrightarrow
\, {\rm Diff}^1_{\mathcal M}(\Theta,\, \Theta) \,\longrightarrow\,T{\mathcal M} \,\longrightarrow\, 0\, ,
$$
where the projection to $T{\mathcal M}$ is the symbol map.
Consider the dual exact sequence of \eqref{ef4}
\begin{equation}\label{e4}
0\,\longrightarrow\, T^*{\mathcal M}\,\longrightarrow\, \text{At}(\Theta)^*
\,\stackrel{\alpha}{\longrightarrow}\,{\mathcal O}^*_{\mathcal M}\,=\,
{\mathcal O}_{\mathcal M} \,\longrightarrow\, 0\, .
\end{equation}
Let $1_{\mathcal M}\, :\, {\mathcal M}\,\longrightarrow\, {\mathcal O}_{\mathcal M}$ be
the section given by the constant function $1$ on $\mathcal M$. Now define
\begin{equation}\label{e3}
\text{At}(\Theta)^* \, \supset\, \alpha^{-1}(1_{\mathcal M}({\mathcal M}))\,=:\,
\text{Conn}(\Theta)\,\stackrel{q}{\longrightarrow}\,{\mathcal M} \, ,
\end{equation}
where $\alpha$ is the projection in \eqref{e4}. Holomorphic sections of $\text{Conn}(\Theta)$
over an open subset $U\, \subset\, {\mathcal M}$ are identified with the
holomorphic connections on $\Theta\big\vert_U$. From \eqref{e4} it follows immediately
that $\text{Conn}(\Theta)$ is a holomorphic torsor over $\mathcal M$
for the holomorphic cotangent bundle $T^*\mathcal M$.

For any open subset $U\, \subset\, {\mathcal M}$, all $C^\infty$ maps $s\, :\, U\, 
\,\longrightarrow\, \text{Conn}(\Theta)$ such that $q\circ s\,=\, \text{Id}_U$, where $q$ 
is the projection in \eqref{e3}, are in bijection with the $C^\infty$ connections $D_U$ on the 
holomorphic line bundle $\Theta\big\vert_U$ such that the $(0,\,1)$-part
of $D_U$ coincides with the Dolbeault 
operator given by the holomorphic structure on $\Theta\big\vert_U$. This condition on $D_U$ is
equivalent to the condition that
$D_U(\gamma)$ is a $C^\infty$ section of $(\Theta\otimes K_X)\big\vert_U$ for every holomorphic
section $\gamma$ of $\Theta_U$. Such a connection $D_U$ on 
$\Theta\big\vert_U$ is holomorphic if and only if the corresponding section $s$ of the 
projection $q$ is holomorphic.

There is a natural K\"ahler form $\omega_{\mathcal M}$ on $\mathcal M$ \cite{AB}; 
this form $\omega_{\mathcal M}$ coincides with the symplectic form on the irreducible unitary
character variety ${\rm Hom}^{\rm ir}(\pi_1(X),\, {\rm U}(r))/{\rm U}(r)$
that was constructed by Goldman \cite{Go}; here ${\rm Hom}^{\rm ir}(\pi_1(X),\, {\rm U}(r))$
denotes the space of all homomorphisms $\rho\,:\, \pi_1(X)\, \longrightarrow\, {\rm U}(r)$
such that the standard action of $\rho(\pi_1(X))$ on ${\mathbb C}^r$ does not preserve any
nonzero proper subspace of ${\mathbb C}^r$. More precisely, we have
\begin{equation}\label{kf}
\omega_{\mathcal M}\,=\, \psi^*_U\Phi_1\, ,
\end{equation}
where $\psi_U$ and $\Phi_1$ are as in \eqref{e2} and \eqref{sf} respectively.

Quillen constructed an explicit Hermitian structure on the line bundle $\Theta$ with the property that 
the curvature of the corresponding Chern connection on $\Theta$ coincides with the K\"ahler form 
$\omega_{\mathcal M}$ in \eqref{kf} \cite{Qu} (see also \cite{BGS1}, \cite{BGS2},
\cite{BGS3}). As a corollary, the de Rham cohomology class for 
$\omega_{\mathcal M}$ is integral. We note that there is at most one Hermitian connection
on $\Theta$ whose curvature is $\omega_{\mathcal M}$. In other words, the Chern connection
of the Hermitian structure on $\Theta$ constructed in \cite{Qu} is the unique Hermitian connection
whose curvature is $\omega_{\mathcal M}$. It should be clarified that this
condition --- that the curvature is $\omega_{\mathcal M}$ --- does not determine
the Hermitian structure on $\Theta$ uniquely; any two Hermitian structures on $\Theta$
satisfying this condition differ by a constant scalar multiplication. However,
the Hermitian connection is unique.
Let $\nabla^Q$ denote the unique Hermitian connection on 
$\Theta$ whose curvature is $\omega_{\mathcal M}$. So $\nabla^Q$ produces a $C^\infty$ section
\begin{equation}\label{e5}
\psi_Q\, :\, {\mathcal M}\, \longrightarrow\, \text{Conn}(\Theta)
\end{equation}
of the holomorphic fibration $q$ in \eqref{e3}.

There is a holomorphic symplectic form
\begin{equation}\label{sf2}
\Phi_2\, \in\, H^0(\text{Conn}(\Theta),\, \Omega^2_{\text{Conn}(\Theta)})
\end{equation}
on $\text{Conn}(\Theta)$ which can be described as follows. The holomorphic line bundle
$q^*\Theta$, where $q$ is the projection in \eqref{e3}, has a tautological holomorphic
connection (see \cite[p.~372, Proposition 3.3]{BHS}, \cite{BB}). The curvature of this
tautological holomorphic connection on $q^*\Theta$ is the $2$--form $\Phi_2$ in \eqref{sf2}.

\subsection{An isomorphism of torsors}

In this subsection a result from \cite{BH} will be recalled.

Let
\begin{equation}\label{mp}
\delta\, :\, {\mathcal C}\times_{\mathcal M} T^*{\mathcal M}\, \longrightarrow\,
{\mathcal C}\ \ \text{ and }\ \ \eta\, :\, {\rm Conn}(\Theta)\times_{\mathcal M}
T^*{\mathcal M}\, \longrightarrow\, {\rm Conn}(\Theta)
\end{equation}
be the holomorphic $T^*{\mathcal M}$--torsor structures on $\mathcal C$ and ${\rm Conn}(\Theta)$
respectively. Let
\begin{equation}\label{mq}
{\textbf m}\, :\, T^*{\mathcal M}\, \longrightarrow\,T^*{\mathcal M}\, ,\ \ v\, \longmapsto\, 2r\cdot v
\end{equation}
be the multiplication by $2r$.

\begin{theorem}[{\cite[Proposition 2.3, Theorem 3.1]{BH}}]\label{thm1}
There is a unique holomorphic isomorphism
$$
F\, :\, {\mathcal C}\, \longrightarrow\, {\rm Conn}(\Theta)
$$
such that
\begin{enumerate}
\item $\varphi\, =\, q\circ F$, where $\varphi$ and $q$ are the projections in
\eqref{e1} and \eqref{e3} respectively,

\item $F\circ\psi_U\,=\, \psi_Q$, where $\psi_U$ and $\psi_Q$ are the sections in
\eqref{e2} and \eqref{e5} respectively, and

\item $F\circ\delta \,=\, \eta \circ (F\times {\textbf m})$
as maps from ${\mathcal C}\times_{\mathcal M} T^*{\mathcal M}$
to ${\rm Conn}(\Theta)$, where $\delta$, $\eta$ and
${\textbf m}$ are the maps in \eqref{mp} and \eqref{mq}.
\end{enumerate}

Moreover, $F^*\Phi_2\,=\, 2r\cdot\Phi_1$, where $\Phi_1$ and $\Phi_2$ are the symplectic
forms in \eqref{sf} and \eqref{sf2} respectively.
\end{theorem}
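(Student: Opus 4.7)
The uniqueness of $F$ satisfying (1)--(3) is automatic: every point of $\mathcal C_E$ is of the form $\psi_U(E)+v$ for a unique $v\in T^*_E\mathcal M$, and (2)--(3) force $F$ to send it to $\psi_Q(E)+2r\cdot v$. My plan is to take this formula as the \emph{definition} of $F$ as a $C^\infty$ torsor isomorphism, for which (1)--(3) hold tautologically, and then to establish separately that (a) $F$ is holomorphic, and (b) $F^*\Phi_2\,=\,2r\cdot\Phi_1$.

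For (a), which is local on $\mathcal M$, I would pick a small open $U\subset\mathcal M$ supporting local holomorphic sections $\phi\colon U\to\mathcal C$ and $\tau\colon U\to{\rm Conn}(\Theta)$, and set $\beta\,:=\,\phi-\psi_U$ and $\gamma\,:=\,\tau-\psi_Q$, two smooth $(1,0)$-forms on $U$. Unwinding the definition of $F$ gives $F(\phi)\,=\,\tau+(2r\cdot\beta-\gamma)$, so $F$ is holomorphic on $\varphi^{-1}(U)$ iff the smooth $1$-form $2r\cdot\beta-\gamma$ is $\bar\partial$-closed, iff
$$2r\cdot\bar\partial\beta\,=\,\bar\partial\gamma$$
as $(1,1)$-forms on $U$. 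Trivialising $\mathcal C|_U\cong T^*U$ by translation along $\phi$, the form $\Phi_1$ takes the shape $d\lambda+\pi^*(\phi^*\Phi_1)$, so pulling back by $\psi_U$ and splitting into types reads off $\bar\partial\beta$ from the $(1,1)$-part of $\psi_U^*\Phi_1\,=\,\omega_{\mathcal M}$ via \eqref{kf}; the parallel computation on ${\rm Conn}(\Theta)|_U$ via $\tau$, combined with the identification of $\psi_Q^*\Phi_2$ with the curvature of $\nabla^Q$, reads off $\bar\partial\gamma$ from the curvature of the Quillen metric on $\Theta$. The $(1,1)$-equation I need is then equivalent to a precise quantitative match between these two Kähler forms with the factor $2r$ built in. This is the hard step: it is exactly Quillen's theorem \cite{Qu} together with the Grothendieck--Riemann--Roch/index-theoretic comparison between the Atiyah--Bott/Goldman symplectic pairing on $\mathcal M$ and the first Chern form of the determinant line $\Theta$, in which the rank $r$ enters through $\mathrm{ch}_2(E)$ and the trace on $\mathfrak u(r)$.

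Granting this, $F$ is holomorphic on all of $\mathcal C$, and in particular $\xi\,:=\,2r\cdot\beta-\gamma$ is a local holomorphic $1$-form on $U$. For (b), working in the same trivialisations, $F$ is the affine bundle map $(x,v)\mapsto(x,2r v+\xi(x))$, and a direct pullback yields $F^*\Phi_2\,=\,2r\cdot d\lambda+\partial\xi+\pi^*(\tau^*\Phi_2)$, whereas $2r\cdot\Phi_1\,=\,2r\cdot d\lambda+2r\cdot\pi^*(\phi^*\Phi_1)$. The remaining identity $\partial\xi\,=\,2r\cdot\phi^*\Phi_1-\tau^*\Phi_2$ then follows automatically by extracting $(2,0)$-parts from the pullback relations for $\psi_U^*\Phi_1$ and $\psi_Q^*\Phi_2$ (which give $\partial\beta\,=\,\phi^*\Phi_1$ and $\partial\gamma\,=\,\tau^*\Phi_2$, respectively), completing the proof.
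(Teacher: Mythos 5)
First, a point of order: the paper does not prove Theorem \ref{thm1} at all --- it is recalled verbatim from \cite{BH} --- so there is no internal proof to compare against. Your architecture is nevertheless the natural one and very likely mirrors \cite{BH}: uniqueness is forced by the torsor structure together with conditions (2) and (3); the $C^\infty$ map exists tautologically; holomorphicity of $F$ is equivalent to $\bar\partial(2r\cdot\beta-\gamma)=0$ for the local difference forms $\beta=\phi-\psi_U$ and $\gamma=\tau-\psi_Q$; and, granted holomorphicity, the symplectic identity does follow formally by separating the $(2,0)$- and $(1,1)$-parts of the difference formulas. All of that is correct.

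The gap is that the entire content of the theorem is concentrated in the one step you label ``the hard step'' and then do not carry out, and two nontrivial inputs are silently assumed along the way. (i) The normal form $\Phi_1=d\lambda+\pi^*(\phi^*\Phi_1)$ in a $\phi$-trivialization --- equivalently the difference formula $s_1^*\Phi_1-s_2^*\Phi_1=d(s_1-s_2)$ for sections of $\mathcal C$ --- is a genuine property of the Goldman/Atiyah--Bott form that needs proof; for $\Phi_2$ it is built into the tautological-connection construction, but for $\Phi_1$ it is not free. (ii) The identity $2r\cdot\bar\partial\beta=\bar\partial\gamma$ amounts to the precise statement that the curvature of $\nabla^Q$ equals $2r$ times $\psi_U^*\Phi_1$; this is where the factor $2r$ is born, and it \emph{is} the theorem rather than a corollary of ``Quillen plus Grothendieck--Riemann--Roch'' invoked as a black box. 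Indeed, if one feeds in the normalizations as literally stated in this paper --- $\psi_U^*\Phi_1=\omega_{\mathcal M}$ from \eqref{kf} and ``the curvature of $\nabla^Q$ is $\omega_{\mathcal M}$'' --- your own computation returns $\bar\partial\beta=\bar\partial\gamma=-\omega_{\mathcal M}$, which contradicts $2r\cdot\bar\partial\beta=\bar\partial\gamma$ for every $r\geq 1$. So the constant in the curvature comparison must be tracked explicitly and cannot be read off from the statements recalled here; until that single quantitative identity is pinned down with its constant, your argument establishes uniqueness and the formal implications, but not the existence of a holomorphic $F$.
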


There is a unique $C^\infty$ isomorphism ${\mathcal C}\, \longrightarrow\, {\rm Conn}(\Theta)$
that satisfies the three conditions in the first part of Theorem \ref{thm1}. The content of
the first part of Theorem \ref{thm1} is that this $C^\infty$ isomorphism is actually holomorphic.
The second part of Theorem \ref{thm1} says that this isomorphism is compatible, up to
the factor $2r$, with the symplectic structures on $\mathcal C$ and ${\rm Conn}(\Theta)$.

\begin{remark}\label{rei}
For a different choice of the theta characteristic on $X$, the corresponding theta line bundle on
${\mathcal M}$ differs from $\Theta$ by a line bundle of order two on ${\mathcal M}$. Any line
bundle of finite order has a tautological flat holomorphic connection. This implies that the
$T^*{\mathcal M}$--torsor ${\rm Conn}(\Theta)$ is actually independent of the choice that theta
characteristic on $X$.
\end{remark}

\section{Another isomorphism of torsors}

\subsection{A canonical connection}\label{se3.1}

For $i\,=\, 1,\, 2$, let
\begin{equation}\label{pi}
p_i\, :\, X\times X\, \longrightarrow\, X
\end{equation}
be the projection to the $i$--th factor. Let
$$
\Delta\, :=\, \{(x,\, x)\,\mid\, x\, \in\, X\}\, \subset\, X\times X
$$
be the diagonal divisor. We shall identify $\Delta$ with $X$ via the map
$x\, \longmapsto\, (x,\, x)$. Using the Poincar\'e adjunction formula,
the restriction of the holomorphic line bundle
${\mathcal O}_{X\times X}(\Delta)$ to $\Delta$ is identified with the normal
bundle of the divisor $\Delta\, \subset\, X\times X$, which in turn is identified with $TX$
using the identification of $\Delta$ with $X$. However this
isomorphism between ${\mathcal O}_{X\times X}(\Delta)\big\vert_\Delta$ and $TX$
changes by multiplication by $-1$ under the involution $(x,\, y)\, \longmapsto\,
(y,\, x)$ of $X\times X$. In other words, this involution acts by multiplication by $-1$ on
${\mathcal O}_{X\times X}(\Delta)\big\vert_\Delta$.

Using the isomorphism between ${\mathcal O}_{X\times X}(\Delta)\big\vert_\Delta$ and
$TX$, the restriction of $(p^*_1 K^{1/2}_X)\otimes (p^*_2 K^{1/2}_X) \otimes {\mathcal O}_{X\times X}(\Delta)$
to $\Delta$ is identified with $K_X\otimes TX\,=\, {\mathcal O}_X$. It should be
clarified that this isomorphism changes by multiplication by
$-1$ under the involution $(x,\, y)\, \longmapsto\, (y,\, x)$ of $X\times X$.

Take any $E\, \in\, {\mathcal M}\setminus D_\Theta$, where $D_\Theta$ is constructed in \eqref{td}. Since
$$
H^0(X,\, E\otimes K^{1/2}_X)\,=\,0\,=\, H^1(X,\, E\otimes K^{1/2}_X)\, ,
$$
using Serre duality, we have
\begin{equation}\label{e7}
H^0(X,\, E^*\otimes K^{1/2}_X)\,=\,H^1(X,\, E\otimes K^{1/2}_X)^*
\,=\,0\,=\, H^0(X,\, E\otimes K^{1/2}_X)^*\,=\, H^1(X,\, E^*\otimes K^{1/2}_X)\, .
\end{equation}

Consider the short exact sequence of coherent sheaves on $X\times X$
$$
0 \, \longrightarrow\, (p^*_1(E\otimes K^{1/2}_X))\otimes (p^*_2(E^*\otimes K^{1/2}_X)) \, \longrightarrow\,
(p^*_1(E\otimes K^{1/2}_X))\otimes (p^*_2(E^*\otimes K^{1/2}_X))
\otimes {\mathcal O}_{X\times X}(\Delta)
$$
$$
\longrightarrow\, \text{End}(E)\big\vert_\Delta\, \longrightarrow\, 0\, ;
$$
recall that $(p^*_1 K^{1/2}_X)\otimes (p^*_2 K^{1/2}_X) \otimes {\mathcal O}_{X\times X}(\Delta)\big\vert_\Delta\,=\,
{\mathcal O}_X$, and note that $(p^*_1 E)\otimes (p^*_2E^*)\big\vert_\Delta\,=\, \text{End}(E)$ (the
identification between $X$ and $\Delta$ is being used). Let
$$
H^0(X\times X,\, (p^*_1(E\otimes K^{1/2}_X))\otimes (p^*_2(E^*\otimes K^{1/2}_X)))
$$
$$
\longrightarrow\,
H^0(X\times X,\, (p^*_1(E\otimes K^{1/2}_X))\otimes (p^*_2(E^*\otimes K^{1/2}_X))
\otimes {\mathcal O}_{X\times X}(\Delta))
$$
\begin{equation}\label{e6}
\stackrel{h_E}{\longrightarrow}\, H^0(X,\, \text{End}(E))
\, \longrightarrow\, H^1(X\times X,\, (p^*_1(E\otimes K^{1/2}_X))\otimes (p^*_2(E^*\otimes K^{1/2}_X)))
\end{equation}
be the corresponding long exact sequence of cohomologies. Using K\"unneth formula,
$$
H^0(X\times X,\, (p^*_1(E\otimes K^{1/2}_X))\otimes (p^*_2(E^*\otimes K^{1/2}_X)))\,=\,
H^0(X,\, E\otimes K^{1/2}_X)\otimes H^0(X,\, E^*\otimes K^{1/2}_X)
$$
and
\begin{equation}\label{cv0}
H^1(X\times X,\, (p^*_1(E\otimes K^{1/2}_X))\otimes (p^*_2(E^*\otimes K^{1/2}_X)))
\end{equation}
$$
=\, \bigoplus_{j=0}^1
H^j(X,\, E\otimes K^{1/2}_X)\otimes H^{1-j}(X,\, E^*\otimes K^{1/2}_X)\, .
$$
Hence invoking \eqref{e7} we conclude that
\begin{equation}\label{cv}
H^k(X\times X,\, (p^*_1(E\otimes K^{1/2}_X))\otimes (p^*_2(E^*\otimes K^{1/2}_X)))\,=\, 0
\end{equation}
for $k\,=\, 0,\, 1$. Consequently, the homomorphism $h_E$ in the
exact sequence in \eqref{e6} is an isomorphism. Now define
\begin{equation}\label{e8}
\beta_E\, :=\, h^{-1}_E(\text{Id}_E)\, \in\, H^0(X\times X,\, (p^*_1(E\otimes K^{1/2}_X))\otimes (p^*_2(E^*\otimes
K^{1/2}_X))\otimes {\mathcal O}_{X\times X}(\Delta))\, ,
\end{equation}
where $\text{Id}_E\,\in\, H^0(X,\, \text{End}(E))$ is the identity endomorphism.

It was noted earlier that $(p^*_1 K^{1/2}_X)\otimes (p^*_2 K^{1/2}_X) \otimes
{\mathcal O}_{X\times X}(\Delta)\big\vert_\Delta\,=\,
{\mathcal O}_X$. We shall now show that
\begin{equation}\label{sh1}
((p^*_1 K^{1/2}_X)\otimes (p^*_2 K^{1/2}_X) \otimes {\mathcal O}_{X\times X}
(\Delta))\big\vert_{2\Delta}\,=\,{\mathcal O}_{2\Delta}\, .
\end{equation}

To prove \eqref{sh1}, take any holomorphic coordinate function $z\, :\, U\, \longrightarrow\, \mathbb C$ on
some analytic open subset $U$ of $X$. Take a holomorphic section
$$
s_z\, \in\, H^0(U,\, K^{1/2}_X\big\vert_U)
$$
such that $s_z\otimes s_z\,=\, dz\, \in\, H^0(U,\, K_U)$; note that there are exactly two
such sections, and they differ by multiplication by $-1$. Now we have
$$\frac{1}{z\circ p_1 - z\circ p_2}(p^*_1s_z)\otimes (p^*_2s_z)\,\in\, H^0(U\times U,\,
((p^*_1 K^{1/2}_X)\otimes (p^*_2 K^{1/2}_X) \otimes {\mathcal O}_{X\times X}(\Delta))\big\vert_{U\times U})\, .
$$
Of course, this section $\frac{1}{z\circ p_1 - z\circ p_2}(p^*_1s_z)\otimes (p^*_2s_z)$ depends 
on the coordinate function $z$. However, it is straight-forward to check that the restriction of 
the section to $(2\Delta)\bigcap (U\times U)$
$$\frac{1}{z\circ p_1 - z\circ p_2}(p^*_1s_z)\otimes (p^*_2s_z)\big\vert_{(2\Delta)\cap
(U\times U)}$$
is actually independent of the choice of the holomorphic coordinate function $z$. Consequently, the 
locally defined sections of the form $\frac{1}{z\circ p_1 - z\circ p_2}(p^*_1s_z)\otimes 
(p^*_2s_z)$ patch together compatibly to define a canonical section of $(p^*_1 K^{1/2}_X)\otimes 
(p^*_2 K^{1/2}_X) \otimes {\mathcal O}_{X\times X}(\Delta)\big\vert_{2\Delta}$. Let
\begin{equation}\label{se}
\sigma_0\, \in\, H^0(2\Delta,\, (p^*_1 K^{1/2}_X)\otimes (p^*_2 K^{1/2}_X)\otimes
{\mathcal O}_{X\times X}(\Delta))
\end{equation}
be this canonical section. This section $\sigma_0$ produces the isomorphism
in \eqref{sh1} between $((p^*_1 K^{1/2}_X)\otimes (p^*_2 K^{1/2}_X) \otimes
{\mathcal O}_{X\times X}(\Delta))\big\vert_{2\Delta}$ and ${\mathcal O}_{2\Delta}$ by sending
any locally defined section $f$ of ${\mathcal O}_{2\Delta}$ to the locally defined section $f\cdot\sigma_0$
of $((p^*_1 K^{1/2}_X)\otimes (p^*_2 K^{1/2}_X) \otimes
{\mathcal O}_{X\times X}(\Delta))\big\vert_{2\Delta}$.

We note that the restriction of the section $\sigma_0$ in \eqref{se} to $\Delta\, \subset\, 
2\Delta$ coincides with the section given by the trivialization of $((p^*_1 K^{1/2}_X)\otimes 
(p^*_2 K^{1/2}_X)\otimes{\mathcal O}_{X\times X}(\Delta))\big\vert_\Delta$ (the trivialization of 
$((p^*_1 K^{1/2}_X)\otimes (p^*_2 K^{1/2}_X)\otimes{\mathcal O}_{X\times 
X}(\Delta))\big\vert_\Delta$ was obtained earlier using the Poincar\'e adjunction formula). 
Consider the section $\beta_E$ in \eqref{e8}. It is easy to see that there is a unique section
$$
\widehat{\beta}_E\, \in\, H^0(2\Delta,\, (p^*_1 E)\otimes (p^*_2E^*))\, ,
$$
over $2\Delta$, such that
\begin{equation}\label{e9}
\beta_E\big\vert_{2\Delta}\,=\, \widehat{\beta}_E\otimes \sigma_0\, .
\end{equation}
Indeed, $(\sigma_0)^{-1}$ is a section of $((p^*_1 K^{1/2}_X)\otimes (p^*_2 K^{1/2}_X)\otimes
{\mathcal O}_{X\times X}(\Delta))^*$ over $2\Delta$. Now define
$$
\widehat{\beta}_E\,:=\, (\beta_E\big\vert_{2\Delta})\otimes (\sigma_0)^{-1}\, ,
$$
and consider it as a section of $((p^*_1 E)\otimes (p^*_2E^*))\big\vert_{2\Delta}$ using the
duality pairing
$$
(((p^*_1 K^{1/2}_X)\otimes
(p^*_2 K^{1/2}_X)\otimes{\mathcal O}_{X\times X}(\Delta))\big\vert_{2\Delta})\otimes
(((p^*_1 K^{1/2}_X)\otimes (p^*_2 K^{1/2}_X)\otimes
{\mathcal O}_{X\times X}(\Delta))^*\big\vert_{2\Delta})
\, \longrightarrow\,{\mathcal O}_{2\Delta}\, .
$$

Since $h_E(\beta_E) \,=\, \text{Id}_E$ (see \eqref{e8}), and the restriction of
$\sigma_0$ to $\Delta\, \subset\, 2\Delta$ coincides with the section
of $((p^*_1 K^{1/2}_X)\otimes (p^*_2 K^{1/2}_X)\otimes{\mathcal O}_{X\times
X}(\Delta))\big\vert_\Delta$ given by its trivialization, we conclude that
$$
\widehat{\beta}_E\big\vert_\Delta\,=\, \text{Id}_E
$$
using the natural identification of $((p^*_1 E)\otimes (p^*_2E^*))\big\vert_\Delta\,\longrightarrow
\,\Delta$ with $\text{End}(E)\,\longrightarrow\, X$. Consequently, the section $\widehat{\beta}_E$
in \eqref{e9} defines a holomorphic connection on the holomorphic vector
bundle $E$, following the idea of Grothendieck of defining a connection as an extension, to the first order
neighborhood of the diagonal, of the isomorphism of the
two pullbacks on the diagonal (see \cite[p.~6, 2.2.4]{De}). This holomorphic connection on $E$ defined by
$\widehat{\beta}_E$ will be denoted by
\begin{equation}\label{be}
\widehat{\beta}'_E\, .
\end{equation}

For notational convenience, let
\begin{equation}\label{nc}
{\mathcal M}^0\, :=\, {\mathcal M}\setminus D_\Theta
\end{equation}
denote the complement of $D_\Theta$ in ${\mathcal M}$. Define
$$
{\mathcal C}^0\,:=\, \varphi^{-1}({\mathcal M}^0)\, \subset\, {\mathcal C}\, ,
$$
where $\varphi$ is the projection in \eqref{e1}. Let
\begin{equation}\label{c0}
\widehat{\varphi}\, :\, {\mathcal C}^0\, \longrightarrow\,{\mathcal M}^0
\end{equation}
be the restriction of the map $\varphi$ to ${\mathcal C}^0$. We note that ${\mathcal C}^0$ is a
holomorphic torsor over ${\mathcal M}^0$ for $T^*{\mathcal M}^0$.

We have the holomorphic map
\begin{equation}\label{c1}
\phi\, :\, {\mathcal M}^0\, \longrightarrow\,{\mathcal C}^0\, ,\ \ E\, \longmapsto\, (E,\, \widehat{\beta}'_E)\, ,
\end{equation}
where $\widehat{\beta}'_E$ is the holomorphic connection
in \eqref{be}. So $\phi$ is a holomorphic
section of the projection $\widehat{\varphi}$ in \eqref{c0}, meaning $\widehat{\varphi}\circ\phi\,=\, \text{Id}_{{\mathcal M}^0}$.

\subsection{A holomorphic isomorphism of torsors}

Define
$$
\text{Conn}(\Theta)^0\, :=\, q^{-1}({\mathcal M}^0)\, \subset\, \text{Conn}(\Theta)\, ,
$$
where $q$ is the projection in \eqref{e3}, and ${\mathcal M}^0$ is the Zariski open subset of $\mathcal M$ in
\eqref{nc}. Let
\begin{equation}\label{c0c}
\widehat{q}\, :\, \text{Conn}(\Theta)^0\, \longrightarrow\,{\mathcal M}^0
\end{equation}
be the restriction of the map $q$ to $\text{Conn}(\Theta)^0$. We note that $\text{Conn}(\Theta)^0$
is a holomorphic torsor over ${\mathcal M}^0$ for $T^*{\mathcal M}^0$.

The restriction of the
line bundle $\Theta\,=\, {\mathcal O}_{\mathcal M}(D_\Theta)$ to ${\mathcal M}^0$ has a tautological 
isomorphism with the trivial line bundle ${\mathcal O}_{{\mathcal M}^0}$. Therefore, the 
trivial holomorphic connection on ${\mathcal O}_{{\mathcal M}^0}$, defined
by the de Rham differential, produces a holomorphic 
connection on the restriction $\Theta\big\vert_{{\mathcal M}^0}$. Let
\begin{equation}\label{c1c}
\tau\, :\, {\mathcal M}^0\, \longrightarrow\,\text{Conn}(\Theta)^0
\end{equation}
be the holomorphic section of the projection $\widehat{q}$ in \eqref{c0c} given by this tautological connection
on $\Theta\big\vert_{{\mathcal M}^0}$.

Let
\begin{equation}\label{mp2}
\delta^0\, :\, {\mathcal C}^0\times_{{\mathcal M}^0} T^*{\mathcal M}^0\, \longrightarrow\,
{\mathcal C}^0\ \ \text{ and }\ \ \eta^0\, :\, {\rm Conn}(\Theta)^0\times_{{\mathcal M}^0}
T^*{\mathcal M}^0\, \longrightarrow\, {\rm Conn}(\Theta)^0
\end{equation}
be the restrictions of the maps $\delta$ and $\eta$ in \eqref{mp}. So $\delta^0$ and
$\eta^0$ give the $T^*{\mathcal M}^0$--torsor structures on ${\mathcal C}^0$ and ${\rm Conn}(\Theta)^0$
respectively. Similarly,
\begin{equation}\label{mq2}
{\textbf m}^0\, :\, T^*{\mathcal M}^0\, \longrightarrow\,T^*{\mathcal M}^0\, ,\ \ v\, \longmapsto\, 2r\cdot v
\end{equation}
is the restriction of the map in \eqref{mq}.

\begin{lemma}\label{lem1}
There is a unique holomorphic isomorphism
$$
G\,\, :\,\, {\mathcal C}^0\, \longrightarrow\, {\rm Conn}(\Theta)^0
$$
such that
\begin{enumerate}
\item $\widehat{\varphi}\, =\, \widehat{q}\circ G$, where $\widehat{\varphi}$ and
$\widehat{q}$ are the projections in \eqref{c0} and \eqref{c0c} respectively,

\item $G\circ\phi\,=\, \tau$, where $\phi$ and $\tau$ are the sections in
\eqref{c1} and \eqref{c1c} respectively, and

\item $G\circ\delta^0 \,=\, \eta^0 \circ (G\times {\textbf m}^0)$ as maps from
${\mathcal C}^0\times_{{\mathcal M}^0} T^*{\mathcal M}^0$ to ${\rm Conn}(\Theta)^0$, where
$\delta^0$, $\eta^0$ and ${\textbf m}^0$ are the maps in \eqref{mp2} and \eqref{mq2}.
\end{enumerate}
\end{lemma}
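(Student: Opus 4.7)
The plan is to read off the formula for $G$ that is forced by conditions (1)--(3) and then verify that it defines a holomorphic isomorphism. Since $\mathcal{C}^0$ and $\mathrm{Conn}(\Theta)^0$ are both holomorphic $T^*\mathcal{M}^0$-torsors equipped with the holomorphic sections $\phi$ (from \eqref{c1}) and $\tau$ (from \eqref{c1c}), the argument is essentially formal torsor algebra; the real work, namely producing a holomorphic section on $\mathcal{M}^0$, has already been done in Section \ref{se3.1}.

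For uniqueness, given any $(E, D) \in \mathcal{C}^0$, let $v_{E,D} \in T^*_E\mathcal{M}^0 = H^0(X,\, \mathrm{End}(E) \otimes K_X)$ be the unique cotangent vector with $(E, D) = \delta^0(\phi(E),\, v_{E,D})$, which exists because $\mathcal{C}^0$ is a torsor. Condition (1) forces $G(E,D)$ to lie over $E$; condition (2) gives $G(\phi(E)) = \tau(E)$; condition (3) then forces
\[
G(E, D) \,=\, \eta^0\bigl(\tau(E),\, 2r \cdot v_{E,D}\bigr).
\]
This determines $G$ uniquely.

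For existence, I take the displayed formula as the definition of $G$. Conditions (1)--(3) hold by construction, and $G$ is a set-theoretic bijection with explicit inverse
\[
G^{-1}(E, C) \,=\, \delta^0\bigl(\phi(E),\, (2r)^{-1} w_{E,C}\bigr),
\]
where $w_{E,C} \in T^*_E\mathcal{M}^0$ is the unique element with $C = \eta^0(\tau(E),\, w_{E,C})$. Holomorphicity is automatic: the subtraction map $\mathcal{C}^0 \times_{\mathcal{M}^0} \mathcal{C}^0 \to T^*\mathcal{M}^0$ inverse to $\delta^0$ is holomorphic (it is the inverse of the biholomorphism produced by the torsor action), so $(E, D) \mapsto v_{E,D}$ is the holomorphic composition of this subtraction with $(\mathrm{Id}_{\mathcal{C}^0},\, \phi \circ \widehat{\varphi})$. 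Multiplication by $2r$, the section $\tau$, and the action $\eta^0$ are all holomorphic, so $G$ is holomorphic. The same reasoning applied to $G^{-1}$ shows that $G$ is a biholomorphism.

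I do not anticipate any serious obstacle in the lemma itself: its content is entirely formal once the holomorphic section $\phi$ is in hand. All of the substance has been concentrated, upstream, in the construction of the canonical holomorphic connection $\widehat{\beta}'_E$ in Section \ref{se3.1}, which is what allows $\phi$ to be holomorphic rather than merely $C^\infty$ (in contrast to $\psi_U$).
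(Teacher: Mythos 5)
Your proof is correct and follows exactly the paper's route: both define $G$ by the forced formula $G(\delta^0(\phi(E),\nu)) = \eta^0(\tau(E),\, 2r\cdot\nu)$ and note that holomorphicity follows from that of $\phi$, $\tau$, $\delta^0$ and $\eta^0$; your version merely spells out the uniqueness and the holomorphicity of the subtraction map in slightly more detail than the paper, which dismisses these points as evident.
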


\begin{proof}
This is straightforward. For any stable vector bundle $E\, \in\, {\mathcal M}^0$ and any $$\nu\,\in\, H^0(X,\, \text{End}(E)
\otimes K_X)\,=\, T^*_E{\mathcal M}^0,$$ define
$$
G(\delta^0(\phi(E),\, \nu))\,=\, \eta^0(\tau(E),\, 2r\cdot\nu)\, .
$$
Then $G$ is evidently a well defined map from ${\mathcal C}^0$ to ${\rm Conn}(\Theta)^0$. It is
holomorphic because $\phi$, $\tau$, $\delta^0$ and $\eta^0$ are all holomorphic maps. This map $G$
satisfies all the three conditions in the lemma. The uniqueness of $G$ is evident.
\end{proof}

\section{The two isomorphisms of torsors coincide}

The following theorem is the main result proved here.

\begin{theorem}\label{thm2}
The restriction of the isomorphism $F$ in Theorem \ref{thm1} to the open subset
${\mathcal C}^0\,\subset\, {\mathcal C}$ coincides with the isomorphism $G$ in
Lemma \ref{lem1}.
\end{theorem}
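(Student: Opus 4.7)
The plan is to invoke the uniqueness clause in Lemma \ref{lem1}. The restriction $F|_{{\mathcal C}^0}$ is automatically a holomorphic isomorphism covering the identity on ${\mathcal M}^0$ and intertwining the $T^*{\mathcal M}^0$-torsor actions up to the factor $2r$; these are conditions (1) and (3) of Lemma \ref{lem1}, inherited directly from the corresponding conditions of Theorem \ref{thm1}. Consequently Theorem \ref{thm2} reduces to verifying that $F|_{{\mathcal C}^0}$ also satisfies condition (2) of Lemma \ref{lem1}, namely
\begin{equation*}
F\circ\phi \;=\; \tau
\end{equation*}
as holomorphic sections ${\mathcal M}^0 \to \text{Conn}(\Theta)^0$.

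Combining this identity with $F\circ\psi_U = \psi_Q$ and the $2r$-torsor property of $F$, it becomes equivalent to the pointwise relation
\begin{equation*}
\tau(E) - \psi_Q(E) \;=\; 2r \cdot \bigl(\widehat{\beta}'_E - \psi_U(E)\bigr) \;\in\; T^*_E{\mathcal M}^0
\end{equation*}
for every $E \in {\mathcal M}^0$. Equivalently, the $1$-form $\omega := F\circ\phi - \tau \in H^0({\mathcal M}^0, \Omega^1_{{\mathcal M}^0})$---which is holomorphic because both $F\circ\phi$ and $\tau$ are holomorphic sections of $\widehat{q}$---must vanish identically.

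To show $\omega = 0$ I would pair it with a tangent vector $v \in T_E{\mathcal M}^0 = H^1(X, \text{End}(E))$ and compute both $F(\phi(E))$ and $\tau(E)$ from their explicit constructions. On the ${\mathcal C}^0$ side, $\widehat{\beta}'_E$ is determined by the Grothendieck-type section $\widehat{\beta}_E$ on $2\Delta$, obtained from the unique $\beta_E$ lifting $\text{Id}_E$ through the boundary homomorphism $h_E$ of \eqref{e6} and divided by the canonical trivialization $\sigma_0$ of \eqref{se}. On the $\text{Conn}(\Theta)$ side, the trivialization defining $\tau(E)$ is the theta section of $\Theta$, viewed through the identification of $\Theta$ with the determinant of cohomology of $E\otimes K^{1/2}_X$ in the universal family; its first-order variation along $v$ is computed by a cup-product in $H^\bullet(X, E\otimes K^{1/2}_X)$. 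The required identity then becomes a \v{C}ech-theoretic matching between the derivative of the theta section and the trace-type pairing of $v$ against the variation of $\widehat{\beta}_E$, rescaled by the factor $2r$ supplied by \cite{BH}.

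The main obstacle is this explicit cohomological comparison: although the formal reduction to $F\circ\phi = \tau$ is routine, identifying the derivative of the theta section of $\Theta$ with $2r$ times the pairing of $v$ against the variation of $\widehat{\beta}_E$ requires a Quillen-type identity relating the determinant-of-cohomology description of $\Theta$ to the Grothendieck construction of $\widehat{\beta}'_E$. Once this identity is established, Theorem \ref{thm2} follows immediately.
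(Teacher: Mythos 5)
Your formal reduction is correct and agrees with the paper's starting point: since $F\big\vert_{{\mathcal C}^0}$ inherits conditions (1) and (3) of Lemma \ref{lem1} from Theorem \ref{thm1}, the uniqueness clause reduces everything to the single identity $F\circ\phi=\tau$, i.e.\ to the vanishing of the holomorphic $1$--form $\omega=F\circ\phi-\tau$ on ${\mathcal M}^0$ (this is exactly the form the paper calls $\Gamma$). The genuine gap is that you never prove $\omega=0$. You reformulate it as the pointwise relation $\tau(E)-\psi_Q(E)=2r\,(\widehat{\beta}'_E-\psi_U(E))$ and then defer to an unestablished ``Quillen-type identity'' matching the derivative of the theta section against a trace pairing with the variation of $\widehat{\beta}_E$. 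That identity \emph{is} the theorem; moreover, the proposed route is doubtful as stated, because both $\psi_U$ (the Narasimhan--Seshadri connection) and $\psi_Q$ (the Quillen connection) are transcendental, non-holomorphic objects with no \v{C}ech-accessible description, so a fiberwise cup-product computation on $X$ cannot see either side of your equation. The whole point of the paper's argument is to avoid any such direct comparison.

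The paper instead kills $\omega$ by global arguments on ${\mathcal M}$ in three steps. First (Proposition \ref{prop1}) it shows $\omega$ extends to a meromorphic $1$--form on all of ${\mathcal M}$ with pole of order at most one along $D_\Theta$, i.e.\ $\omega\in H^0({\mathcal M},(T^*{\mathcal M})\otimes\Theta)$; this uses the decomposition $\omega=2r\varpi_1-\varpi_2$ with $\varpi_1=\phi-\psi_U$, $\varpi_2=\tau-\psi_Q$, the fact that the canonical trivialization of $\Theta$ off $D_\Theta$ gives a \emph{logarithmic} connection, and a global construction of $\phi$ via the direct image sheaves $J_*({\mathcal E}\otimes{\mathcal K}\otimes q_{12}^*{\mathcal O}_{X\times X}(\Delta))$ on $X\times X\times{\mathcal M}$ showing $\phi$ also has a first-order pole. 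Second (Proposition \ref{prop2}) it proves $H^0({\mathcal M},P_*({\mathcal V}^0\otimes p^*K_X)\otimes\Theta)=0$ by restricting to the ${\mathbb C}{\mathbb P}^{r-1}$'s of the Hecke correspondence, where the relevant bundle becomes $T^*{\mathbb C}{\mathbb P}^{r-1}\otimes{\mathcal O}(1)$, which has no sections; hence $\omega$ must be the pullback of a holomorphic $1$--form $\Gamma'$ on the Jacobian. Third (Lemma \ref{lem2}) the dualization involution $E\mapsto E^*$ leaves $\omega$ invariant (because $\psi_U$, $\psi_Q$, $\tau$ and $\phi$ are all suitably equivariant), so $\Gamma'$ is invariant under $L\mapsto L^*$ on $J(X)$, whereas every holomorphic $1$--form on $J(X)$ is anti-invariant; hence $\Gamma'=0$ and $\omega=0$. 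None of this machinery appears in your proposal, and without it (or a worked-out substitute for your cohomological identity) the proof is incomplete.
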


\begin{proof}
In view of the first condition in both Theorem \ref{thm1} and Lemma \ref{lem1},
we get a map
\begin{equation}\label{ga0}
\Gamma_0\, :\, {\mathcal C}^0\, \longrightarrow\, T^*{\mathcal M}^0\, ,\ \ z\, \longmapsto\,
F(z)- G(z)\, .
\end{equation}
In other words, $F(z)\,=\,\eta^0(G(z),\, \Gamma_0(z))$.
This map $\Gamma_0$ is holomorphic because both $F$ and $G$ are so. Take any $E\, \in\,
{\mathcal M}^0$ and any $$\alpha\, \in\, \widehat{\varphi}^{-1}(E)\, \subset\,{\mathcal C}^0\, ,
$$
where $\widehat{\varphi}$
is the projection in \eqref{c0}, and also take any
$$\nu\, \in\, H^0(X,\, \text{End}(E) \otimes K_X)\,=\, T^*_E{\mathcal M}^0\, .$$ Now from the
third condition in both Theorem \ref{thm1} and Lemma \ref{lem1}, we have
$$
\Gamma_0(\alpha+\nu)\,=\, F(\alpha+\nu)- G(\alpha+\nu)\,=\, F(\alpha)- G(\alpha)+2r\cdot\nu-2r\cdot\nu
\,=\, \Gamma_0(\alpha)\, .
$$
Consequently, the map $\Gamma_0$ in \eqref{ga0} produces a holomorphic $1$--form
\begin{equation}\label{ga}
\Gamma\, \in\, H^0({\mathcal M}^0,\, T^*{\mathcal M}^0)
\end{equation}
that sends any $E\, \in\, {\mathcal M}^0$ to $\Gamma_0(\alpha)\,\in\, T^*_E{\mathcal M}^0$
with $\alpha\, \in\, \widehat{\varphi}^{-1}(E)$; as shown above, $\Gamma_0(\alpha)$ is independent
of the choice of $\alpha$. In other words, $\Gamma_0\, =\,\Gamma\circ\widehat{\varphi}$,
where $\widehat{\varphi}$ is the projection in \eqref{c0}.

The following proposition would be used in the proof of Theorem \ref{thm2}.

\begin{proposition}\label{prop1}
The holomorphic $1$--form $\Gamma$ on ${\mathcal M}^0$ in \eqref{ga} is a meromorphic
$1$--form on ${\mathcal M}$, and its order of pole at the divisor
$D_\Theta\,=\, {\mathcal M}\setminus {\mathcal M}^0$ is at most one, or equivalently,
$$
\Gamma\, \in\, H^0({\mathcal M},\, (T^*{\mathcal M})\otimes\Theta)\,=\, H^0({\mathcal M},\,
(T^*{\mathcal M})\otimes {\mathcal O}_{\mathcal M}(D_\Theta))\, .
$$
\end{proposition}

\begin{proof}[{Proof of Proposition \ref{prop1}}]
Let ${\mathbb W}\, \longrightarrow\, {\mathcal M}$ be a holomorphic torsor for
$T^*{\mathcal M}$ and $s$ a holomorphic section of ${\mathbb W}$ over the open
subset ${\mathcal M}^0\,=\, {\mathcal M}\setminus D_\Theta$. Then the meromorphicity of
$s$ is defined by choosing holomorphic trivializations of ${\mathbb W}$
on open neighborhoods, in ${\mathcal M}$, of points of $D_\Theta$ (a trivialization
of a torsor is just a holomorphic section of it). Such a trivialization of $\mathbb W$
over $U\, \subset\, {\mathcal M}$ turns $s$ into a holomorphic $1$--form on $U\cap {\mathcal M}^0$;
define $s\big\vert_U$
to be meromorphic if this holomorphic $1$--form on $U\cap {\mathcal M}^0$ is meromorphic
near $D_\Theta\bigcap U\, \subset\, U$. Since
any two holomorphic trivializations, over $U$, of the torsor ${\mathbb W}$ differ by a holomorphic
$1$--form on $U$, this definition of the meromorphicity of $s\big\vert_U$ does not depend on the
choice of the trivialization of ${\mathbb W}\big\vert_U$. For the same reason, the order
of pole at $D_\Theta$ of a meromorphic section $s$ of ${\mathbb W}$ of the above type is also well-defined.

Let $\varpi_1$ be the smooth $(1,0)$--form on ${\mathcal M}^0$ given by $\phi-\psi_U\big\vert_{{\mathcal M}^0}$, where
$\phi$ (respectively, $\psi_U$) is the section of the $T^*{\mathcal M}^0$--torsor ${\mathcal C}^0$
(respectively, $T^*{\mathcal M}$--torsor ${\mathcal C}$)
constructed in \eqref{c1} (respectively, \eqref{e2}). Let $\varpi_2$ be the
smooth $(1,0)$--form on ${\mathcal M}^0$
given by $\tau-\psi_Q\big\vert_{{\mathcal M}^0}$, where
$\tau$ (respectively, $\psi_Q$) is the section of the $T^*{\mathcal M}^0$--torsor $\text{Conn}(\Theta)^0$
(respectively, $T^*{\mathcal M}$--torsor $\text{Conn}(\Theta)$) constructed
in \eqref{c1c} (respectively, \eqref{e5}). It can be shown that
\begin{equation}\label{fi}
\Gamma\,=\, 2r\cdot\varpi_1- \varpi_2\, .
\end{equation}
Indeed, using the third property in Theorem \ref{thm1} and the third property in Lemma \ref{lem1} we have
$$2r\cdot\varpi_1- \varpi_2\, =\, 2r(\phi-\psi_U\big\vert_{{\mathcal M}^0})
- (\tau-\psi_Q\big\vert_{{\mathcal M}^0})
\,=\, G(\phi)-G(\psi_U\big\vert_{{\mathcal M}^0})- \tau +\psi_Q\big\vert_{{\mathcal M}^0}
$$
$$
=\, \psi_Q\big\vert_{{\mathcal M}^0} -G(\psi_U\big\vert_{{\mathcal M}^0})
\,=\, F(\psi_U\big\vert_{{\mathcal M}^0}) - G(\psi_U\big\vert_{{\mathcal M}^0})
\,=\, \Gamma\, .
$$

Both $\psi_U$ and $\psi_Q$ are smooth sections over entire ${\mathcal M}$. The holomorphic section $\tau$ of 
$\text{Conn}(\Theta)^0$ is a meromorphic section of $\text{Conn}(\Theta)$ with a pole of order one at $D_\Theta$. 
Indeed, this follows immediately from the fact that the holomorphic connection on the line bundle 
$\Theta\big\vert_{{\mathcal M}^0} \,=\, {\mathcal O}_{{\mathcal M}^0}$ over ${\mathcal M}^0$, given by the 
canonical holomorphic trivialization of $\Theta\big\vert_{{\mathcal M}^0}$ (the holomorphic connection is defined 
by the de Rham differential), is actually a logarithmic connection on $\Theta$ over ${\mathcal M}$. In view of 
these, using \eqref{fi} we conclude the following:
\begin{itemize}
\item $\Gamma$ is a meromorphic $1$--form on ${\mathcal M}$ if and only if the section $\phi$ of ${\mathcal C}^0$ in 
\eqref{c1} is meromorphic, and

\item if $\Gamma$ is meromorphic, and the order of its pole at $D_\Theta$ is more than one,
then the order of the pole of $\Gamma$ at $D_\Theta$ coincides with the order of pole of $\phi$ at 
$D_\Theta$, in particular, the order of the pole of $\phi$ at $D_\Theta$ is more than one.
\end{itemize}

Therefore, to prove the proposition it suffices to show the following two:
\begin{enumerate}
\item the section $\phi$ of 
${\mathcal C}^0$ is meromorphic, and

\item the order of pole of $\phi$ at $D_\Theta$ is one.
\end{enumerate}
These will be proved by giving a global construction of $\phi$.

It is known that there is no Poincar\'e vector bundle over $X\times {\mathcal M}$
\cite[p.~69, Theorem 2]{Ra}. However, there is a canonical algebraic vector bundle over $X\times
X\times {\mathcal M}$ whose fiber over $X\times X\times\{E\}$ is $E\boxtimes E^*\,=\, (p^*_1E)
\otimes (p^*_2E^*)$ for every $E\, \in\, {\mathcal M}$, where $p_1$ and $p_2$ are the
projections in \eqref{pi}. This canonical
vector bundle on $X\times X\times {\mathcal M}$, which we shall denote by ${\mathcal E}$, can be
constructed as a descended bundle from the product of $X\times X$ with the quot scheme. The
reason that the corresponding vector bundle descends is that the action of the multiplicative
group ${\mathbb C}^*$ on $E\boxtimes E^*$, induced
by the scalar multiplications on $E$, is the trivial action. The restriction of
the vector bundle ${\mathcal E}\, \longrightarrow\, X\times X\times {\mathcal M}$ to
$$
\Delta\times {\mathcal M}\, \subset\, X\times X\times {\mathcal M}
$$
coincides with the universal endomorphism bundle over $X\times {\mathcal M}$. Let
\begin{equation}\label{cV}
{\mathcal V}\,:=\, {\mathcal E}\big\vert_{\Delta\times {\mathcal M}}\, \longrightarrow\, 
\Delta\times {\mathcal M}\,=\, X\times {\mathcal M}
\end{equation}
be the universal endomorphism bundle. So we have ${\mathcal V}\big\vert_{X\times\{E\}}\,=\, \text{End}(E)$
for all $E\, \in\, \mathcal M$.

Let $q_{12}\, :\, X\times X\times {\mathcal M}\, \longrightarrow\, X\times X$ be the projection to the
first two factors in the Cartesian product. Let
$$
q_{2}\, :\, X\times X\times {\mathcal M}\, \longrightarrow\, X\, , \ \ (x,\, y,\, E)\, \longmapsto\, y
$$
be the projection to the second factor. Let
\begin{equation}\label{J}
J\, :\, X\times X\times {\mathcal M}\, \longrightarrow\, {\mathcal M}\, , \ \ (x,\, y,\, E)\, \longmapsto\, E
\end{equation}
be the projection to the third factor. For notational convenience, the holomorphic line bundle
$q_{12}^*((p^*_1 K^{1/2}_X) \otimes (p^*_2K^{1/2}_X))$ on $X\times X\times {\mathcal M}$ will be denoted
by $\mathcal K$; recall that $K^{1/2}_X$ is a theta characteristic on $X$.

Consider the vector bundle
\begin{equation}\label{cvb}
{\mathcal E}\otimes {\mathcal K}\otimes q^*_{12}{\mathcal O}_{X\times X}(\Delta)\,
\longrightarrow\, X\times X\times {\mathcal M}\, .
\end{equation}
It fits in the following short exact sequence of coherent sheaves on $X\times X\times {\mathcal M}$:
\begin{equation}\label{ls}
0\, \longrightarrow\,{\mathcal E}\otimes {\mathcal K}\, 
\longrightarrow\,{\mathcal E}\otimes {\mathcal K}\otimes q^*_{12} {\mathcal 
O}_{X\times X}(\Delta) \, \longrightarrow\, {\mathcal V} \, 
\longrightarrow\, 0\, ,
\end{equation}
where $\mathcal V$ is defined in \eqref{cV}, and it is supported on
$\Delta\times{\mathcal M}\,=\, X\times{\mathcal M}\, \subset\, X\times X\times {\mathcal M}$. Recall from
Section \ref{se3.1} that the restriction of $(p^*_1 K^{1/2}_X)\otimes (p^*_2 K^{1/2}_X) \otimes
{\mathcal O}_{X\times X}(\Delta)$
to $\Delta\, \subset\, X\times X$ is identified with ${\mathcal O}_X$; so, the restriction
of ${\mathcal E}\otimes {\mathcal K}\otimes q^*_{12} {\mathcal O}_{X\times X}(\Delta)$ to
$\Delta\times\mathcal M$ is identified with ${\mathcal E}\big\vert_{\Delta\times
{\mathcal M}}\,=\, \mathcal V$. Now
consider the long exact sequence of direct images, for the projection $J$ in \eqref{J}, corresponding to the
short exact sequence of sheaves in \eqref{ls}:
\begin{equation}\label{s0}
0\, \longrightarrow\, J_*({\mathcal E}\otimes{\mathcal K})\, \longrightarrow\, J_*({\mathcal E}\otimes {\mathcal K}\otimes q^*_{12}
{\mathcal O}_{X\times X}(\Delta))\, \longrightarrow\, J_*{\mathcal V}
\end{equation}
$$
\longrightarrow\, R^1J_*({\mathcal E}\otimes {\mathcal K}) \, \longrightarrow\, \ldots\, .
$$
First note that $J_*({\mathcal E}\otimes {\mathcal K})\,=\, 0$, because for every 
$E\, \in\, {\mathcal M}^0$, we have
$$
H^0(X\times X,\, (p^*_1(E\otimes K^{1/2}_X))\otimes (p^*_2(E^*\otimes K^{1/2}_X)))\,=\, 0
$$
(see \eqref{cv}). Also, $J_*{\mathcal V}\,=\, {\mathcal O}_{\mathcal M}$, because every stable vector bundle is simple.
Consequently, from \eqref{s0} we have the exact sequence
\begin{equation}\label{s1}
0\, \longrightarrow\, J_*({\mathcal E}\otimes {\mathcal K}\otimes q^*_{12}
{\mathcal O}_{X\times X}(\Delta))\, \longrightarrow\, {\mathcal O}_{\mathcal M}
\longrightarrow\, R^1J_*({\mathcal E}\otimes {\mathcal K})\, .
\end{equation}

Next we note that
$$
H^1(X\times X,\, (p^*_1(E\otimes K^{1/2}_X))\otimes (p^*_2(E^*\otimes K^{1/2}_X)))\,=\, 0
$$
for all $E\, \in\, {\mathcal M}^0$ (see \eqref{cv}). Also, for a general point
$E\, \in\, D_\Theta$, using \eqref{cv0} it follows that
$$
\dim H^1(X\times X,\, (p^*_1(E\otimes K^{1/2}_X))\otimes (p^*_2(E^*\otimes K^{1/2}_X)))\,=\, 1\, .
$$
Consequently, the support of $R^1J_*({\mathcal E}\otimes {\mathcal K})$ is the divisor
$D_\Theta$, and the rank of the sheaf $$R^1J_*({\mathcal E}\otimes {\mathcal K})\,
\longrightarrow\, D_\Theta$$ is one.

Let $1_{\mathcal M}$ be the section of ${\mathcal O}_{\mathcal M}$ given by the constant function $1$ on
$\mathcal M$. Since $R^1J_*({\mathcal E}\otimes {\mathcal K})$ is supported on $D_\Theta$, from \eqref{s1}
we conclude the following:
\begin{itemize}
\item The restriction $1_{\mathcal M}$ to
${\mathcal M}^0\,=\, {\mathcal M}\setminus D_\Theta\, \subset\, {\mathcal M}$ is a
holomorphic section of
$$(J_*({\mathcal E}\otimes {\mathcal K}\otimes q^*_{12}
{\mathcal O}_{X\times X}(\Delta)))\big\vert_{{\mathcal M}^0}\, \longrightarrow\, {\mathcal M}^0
$$
(more precisely, $1_{\mathcal M}\big\vert_{{\mathcal M}^0}$ is the image
of a holomorphic section of $(J_*({\mathcal E}\otimes {\mathcal K}\otimes q^*_{12}
{\mathcal O}_{X\times X}(\Delta)))\big\vert_{{\mathcal M}^0}$); this section of
$(J_*({\mathcal E}\otimes {\mathcal K}\otimes q^*_{12}
{\mathcal O}_{X\times X}(\Delta)))\big\vert_{{\mathcal M}^0}$ given by $1_{\mathcal M}$
will be denoted by $1'_{\mathcal M}$.

\item The above defined $1'_{\mathcal M}$ is a meromorphic section of
$J_*({\mathcal E}\otimes {\mathcal K}\otimes q^*_{12}
{\mathcal O}_{X\times X}(\Delta))$ with a pole of order one on $D_\Theta$.
\end{itemize}
In other words, we have
\begin{equation}\label{s2}
1'_{\mathcal M}\, \in\, H^0({\mathcal M},\, J_*({\mathcal E}\otimes {\mathcal K}\otimes q^*_{12}
{\mathcal O}_{X\times X}(\Delta))\otimes {\mathcal O}_{\mathcal M}(D_\Theta))\, .
\end{equation}

Now using the projection formula we have
$$J_*({\mathcal E}\otimes {\mathcal K}\otimes q^*_{12}
{\mathcal O}_{X\times X}(\Delta))\otimes {\mathcal O}_{\mathcal M}(D_\Theta)\,=\,
J_*({\mathcal E}\otimes {\mathcal K}\otimes (q^*_{12}
{\mathcal O}_{X\times X}(\Delta))\otimes J^*{\mathcal O}_{\mathcal M}(D_\Theta))\, ,
$$
and hence $1'_{\mathcal M}$ in \eqref{s2} defines a section
\begin{equation}\label{s3}
1''_{\mathcal M}\, \in\, H^0(X\times X\times{\mathcal M},\,
{\mathcal E}\otimes {\mathcal K}\otimes (q^*_{12}
{\mathcal O}_{X\times X}(\Delta))\otimes J^*{\mathcal O}_{\mathcal M}(D_\Theta))\, .
\end{equation}

For every $E\, \in\, {\mathcal M}^0$, the section $\beta_E$ in \eqref{e8} coincides with 
the restriction $1''_{\mathcal M}\big\vert_{X\times X\times\{E\}}$, where $1''_{\mathcal M}$ is
the section in \eqref{s3}. Now from the construction in \eqref{c1} of the section $\phi$ of the 
$T^*{\mathcal M}^0$--torsor ${\mathcal C}^0$ it follows that
\begin{enumerate}
\item $\phi$ is meromorphic, and 

\item the order of pole, at $D_\Theta$, of $\phi$ is one.
\end{enumerate}
As noted before, Proposition \ref{prop1} follows from these two.
\end{proof}

Continuing with the proof Theorem \ref{thm2}, let
\begin{equation}\label{e10}
\Psi\,\, :\,\, {\mathcal M}\, \longrightarrow\, J(X)\, =\, \text{Pic}^0(X)
\end{equation}
be the determinant map $E\, \longmapsto\, \bigwedge^rE$. The image of the pullback homomorphism
$$
(d\Psi)^*\,\, :\,\, \Psi^*T^*J(X)\, \longrightarrow\, T^*{\mathcal M}\, ,
$$
where $d\Psi$ is the differential of $\Psi$,
has a canonical direct summand; we shall now recall a description of this direct summand.

As in \eqref{cV}, let
${\mathcal V}\, \longrightarrow\, X\times {\mathcal M}$ be the universal endomorphism
bundle, and let $${\mathcal V}^0\,\subset\, \mathcal V$$ be the universal endomorphism
bundle of trace zero. There is a natural decomposition into traceless and trace components:
\begin{equation}\label{bd}
{\mathcal V}\, =\, {\mathcal V}^0\oplus {\mathcal O}_{X\times\mathcal M}\, ;
\end{equation}
the above inclusion map ${\mathcal O}_{X\times\mathcal M}\, \hookrightarrow\, \mathcal V$
is defined by $f\, \longmapsto\, f\cdot\text{Id}$. Let
\begin{equation}\label{P}
P\, :\, X\times {\mathcal M}\, \longrightarrow\, {\mathcal M}\ \ \text{ and }
p\, :\, X\times {\mathcal M}\, \longrightarrow\, X
\end{equation}
be the natural projections. Then we have
$$
T^*{\mathcal M}\,=\, P_* ({\mathcal V}\otimes p^*K_X)\, ,
$$
where $P$ and $p$ are the projections in \eqref{P}.
Consequently, the decomposition in \eqref{bd} produces a holomorphic decomposition
\begin{equation}\label{bd2}
T^*{\mathcal M}\, =\, P_*({\mathcal V}^0\otimes p^*K_X) \oplus
P_*p^*K_X \, ;
\end{equation}
we note that $P_*p^*K_X$ is the trivial holomorphic vector bundle
$$
{\mathcal M}\times H^0(X,\, K_X)\, \longrightarrow\, \mathcal M
$$
with fiber $H^0(X,\, K_X)$. Tensoring \eqref{bd2} with $\Theta$ we obtain
$$
(T^*{\mathcal M})\otimes\Theta\, =\, P_*({\mathcal V}^0\otimes p^*K_X)\otimes\Theta
\oplus ({\mathcal M}\times H^0(X,\, K_X))\otimes\Theta\, .
$$
This produces a decomposition
$$
H^0({\mathcal M},\, (T^*{\mathcal M})\otimes\Theta)\,=\,
H^0({\mathcal M},\, P_*({\mathcal V}^0\otimes p^*K_X)\otimes\Theta)\oplus 
(H^0({\mathcal M},\, \Theta)\otimes H^0(X,\, K_X))
$$
\begin{equation}\label{e11}
=\,
H^0({\mathcal M},\, P_*({\mathcal V}^0\otimes p^*K_X)\otimes\Theta)\oplus 
H^0(X,\, K_X)\, ;
\end{equation}
the last equality follows from the fact that $H^0({\mathcal M},\, \Theta)\,=\,
\mathbb C$ \cite[p.~169, Theorem 2]{BNR}. We note that the inclusion map
$$
H^0(X,\, K_X)\,=\, H^0(J(X),\, T^*J(X)) \, \hookrightarrow\, 
H^0({\mathcal M},\, (T^*{\mathcal M})\otimes\Theta)
$$
in \eqref{e11} coincides with the pullback of $1$--forms on $J(X)$
to ${\mathcal M}$ by the projection $\Psi$ in \eqref{e10}.

The following proposition would be used in the proof of Theorem \ref{thm2}.

\begin{proposition}\label{prop2}
For the projections $P$ and $p$ in \eqref{P},
$$
H^0({\mathcal M},\, P_*({\mathcal V}^0\otimes p^*K_X)\otimes\Theta)\,=\, 0\, ,
$$
where ${\mathcal V}^0$ is the subbundle in \eqref{bd}.
\end{proposition}

\begin{proof}[{Proof of Proposition \ref{prop2}}]
If $r\,=\,1$, then ${\mathcal V}^0\,=\, 0$, and hence in this case the
proposition is obvious. Hence in the proof we assume that $r\, \geq\, 2$. The proof proceeds by showing 
that the sections must vanish on the projective spaces lying inside ${\mathcal M}$ given by Hecke transforms.

Let
\begin{equation}\label{bp}
H\, :\, {\mathbb P}\, \longrightarrow\, X\times{\mathcal M}
\end{equation}
be the universal projective bundle; so for any $(x,\, E)\, \in\, X\times{\mathcal M}$,
the inverse image $H^{-1}(x,\, E)$ is the space of all hyperplanes in the fiber $E_x$;
in particular,
$\mathbb P$ is a holomorphic fiber bundle over $X\times{\mathcal M}$ with
the projective space ${\mathbb C}{\mathbb P}^{r-1}$ as the typical fiber. Let
$$
T_H\,=\, \text{kernel}(dH) \, \longrightarrow\, {\mathbb P}
$$
be the (holomorphic) relative tangent bundle for the projection $H$ in \eqref{bp}, where $dH$
is the differential of the map $H$. We note that ${\mathcal V}^0$ in \eqref{bd} is the
following direct image:
$$
H_* T_H \, =\, {\mathcal V}^0\, \longrightarrow\, X\times{\mathcal M}\, .
$$

Given any element $(x,\, E)\, \in\, X\times \mathcal M$, along with
a hyperplane $S\, \subset\, E_x$,
let $F(x, E,S)$ be the holomorphic vector bundle over $X$ whose sheaf of sections fits
in the short exact sequence of coherent sheaves on $X$
$$
0\, \longrightarrow\, F(x, E,S)\, \longrightarrow\, E\, \longrightarrow\, E_x/S\,
\longrightarrow\, 0\, ;
$$
the above sheaf $E_x/S$ is the torsion sheaf supported
at the point $x$ and its stalk at $x$ is the quotient line $E_x/S$.

Let $\mathcal N$ denote the moduli space of stable vector bundles over $X$ of rank $r$ and
degree $-1$. Using the above construction of $F(x, E,S)$, we get a rational map
$$
\xi\, :\, {\mathbb P}\, \dasharrow\, X\times \mathcal N\, , \ \ (x,\, E,\,S)\,\longmapsto\,
(x,\, F(x, E,S))\, ,
$$
which is called the Hecke morphism \cite{NR1}, \cite{NR2}.
It is known that there is a nonempty Zariski open subset
$${\mathcal U}\, \subset\, {\mathbb P}$$
such that the pair $(\xi,\, {\mathcal U})$ satisfies the following conditions:
\begin{enumerate}
\item The rational map $\xi$ is actually defined as a map on $\mathcal U$; the restriction of
$\xi$ to $\mathcal U$ will be denoted by $\widehat{\xi}$.

\item The codimension of the complement ${\mathbb P}\setminus \mathcal U$ is at least
two (see the proof of \cite[Proposition 5.4]{NR2}).

\item The map $\widehat{\xi}\, :\, {\mathcal U}\, \longrightarrow\, \xi({\mathcal U})$
defines a holomorphic fiber bundle over $\xi({\mathcal U})$
with the projective space ${\mathbb C}{\mathbb P}^{r-1}$ as
the typical fiber \cite[p.~411, Proposition 6.8]{NR2}.

\item The relative tangent bundle $T_H$ on $\mathcal U$ coincides with
$\Omega_{\widehat{\xi}}\otimes (p\circ H)^*K_X$, where $\Omega_{\widehat{\xi}}$ is
the relative cotangent bundle for the map $\widehat{\xi}$
(see \cite[p. 265, (2.7)]{Bi1}).
\end{enumerate}
It may be clarified that the above vector bundle $\Omega_{\widehat{\xi}}$ is the
cokernel of the pullback homomorphism $$d(\widehat{\xi})^*\, :\,
\widehat{\xi}^*T^*(\xi({\mathcal U}))\,\longrightarrow\, T^*{\mathcal U}\, .$$

Take a point
\begin{equation}\label{pz}
z\,=\, (x,\, W)\, \in\, \widehat{\xi}({\mathcal U})\, \subset\,
X\times \mathcal N\, .
\end{equation}
Let
$$
{\mathbb F}_z\,=\, \widehat{\xi}^{-1}(z)\, \subset\, {\mathcal U}
$$
be the fiber of $\widehat{\xi}$ over $z$; as mentioned in
(3) above, this fiber is isomorphic to 
${\mathbb C}{\mathbb P}^{r-1}$. We will compute the restriction of the line bundle $(P\circ 
H)^*\Theta$ to ${\mathbb F}_z \,\cong\,{\mathbb C}{\mathbb P}^{r-1}$, where $P$ and $H$
are the projections in \eqref{P} and \eqref{bp} respectively.

Let $P(W_x)$ be the projective space that parametrizes all the lines in the fiber $W_x$
of the vector bundle $W$ in \eqref{pz}. Let
$L_0\, \longrightarrow\, P(W_x)$ be the tautological line bundle of degree $-1$; the
fiber of $L_0$ over any line $\zeta\, \subset\, W_x$ is $\zeta$ itself.
The inverse image ${\mathbb F}_z\,=\, \widehat{\xi}^{-1}(z)$ is identified with
this projective space $P(W_x)$. For any $\zeta\, \in\, P(W_x)$, the corresponding
element $(x,\, E,\, S)\,\in\, {\mathbb F}_z \, \subset\,{\mathbb P}$ is uniquely determined by
the following condition: The holomorphic vector bundle $E$ fits in the short exact sequence
of sheaves on $X$
$$
0\, \longrightarrow\, W\, \longrightarrow\, E\, \longrightarrow\, {\mathcal Q}\,
\longrightarrow\, 0\, ,
$$
where $\mathcal Q$ is a torsion sheaf of degree one supported at $x$, and the
kernel of the homomorphism of fibers $W_x\, \longrightarrow\, E_x$, given by
the above homomorphism $W\, \longrightarrow\, E$ of sheaves, is the line $\zeta$, while the
subspace $S\, \subset\, E_x$ is the image of this homomorphism $W_x\, \longrightarrow\, E_x$.

To describe the fiber ${\mathbb F}_z$ globally,
let $\Pi_1$ (respectively, $\Pi_2$) be the projection of $X\times P(W_x)$ to $X$
(respectively, $P(W_x)$). On $X\times P(W_x)$ we have the holomorphic vector bundle ${\mathcal W}$
which is defined by the short exact sequence of sheaves on $X\times P(W_x)$
$$
0\,\longrightarrow\, {\mathcal W}^*\,\longrightarrow\, \Pi^*_1 W^*
\,\longrightarrow\, \iota^x_* L^*_0\,\longrightarrow\, 0\, ,
$$
where $\iota^x\, :\, P(W_x)\, \longrightarrow\, X\times P(W_x)$
is the embedding defined by $y\, \longmapsto\, (x,\, y)$. From this exact sequence
it follows that $\mathcal W$ fits in the short exact sequence of sheaves
\begin{equation}\label{ps}
0\,\longrightarrow\, \Pi^*_1 W \,\longrightarrow\, {\mathcal W}
\,\longrightarrow\, \iota^x_* L_0\,\longrightarrow\, 0
\end{equation}
on $X\times P(W_x)$. The map $H\big\vert_{{\mathbb F}_z}$ coincides with the classifying map
$${\mathbb F}_z\, \longrightarrow\, {\mathcal U}\, \subset\, {\mathcal M}$$ for the
above holomorphic family of vector bundles ${\mathcal W}$ on $X$ parametrized by $P(W_x)\,=\, {\mathbb F}_z$.

Now, tensoring the exact sequence in \eqref{ps} with $\Pi^*_1 K^{1/2}_X$,
and then taking the long exact sequence of direct images with respect to the
projection $\Pi_2$, we have the exact sequence of sheaves on $P(W_x)$
$$
0\,\longrightarrow\, \Pi_{2*} (\Pi^*_1 (W\otimes K^{1/2}_X))
\,\longrightarrow\,\Pi_{2*} ({\mathcal W}\otimes \Pi^*_1 K^{1/2}_X)
\,\longrightarrow\, L_0
$$
\begin{equation}\label{les}
\,\longrightarrow\, R^1 \Pi_{2*} (\Pi^*_1 (W\otimes K^{1/2}_X)) 
\,\longrightarrow\, R^1 \Pi_{2*} ({\mathcal W}\otimes \Pi^*_1 K^{1/2}_X)
\,\longrightarrow\, 0\, ;
\end{equation}
note that the restriction of $\Pi^*_1 K^{1/2}_X$ to the image of the embedding $\iota^x$
is a trivial line bundle, and also note that $$R^1\Pi_{2*} ((\iota^x_* L_0)\otimes \Pi^*_1 K^{1/2}_X)
\,=\, 0$$
because the support of $(\iota^x_* L_0)\otimes \Pi^*_1 K^{1/2}_X$ is finite over
$P(W_x)$.

Since $H\big\vert_{{\mathbb F}_z}$ coincides with the classifying map for the
above holomorphic family of vector bundles ${\mathcal W}$ on $X$ parametrized by $P(W_x)$, it follows
that the pulled back line bundle $((P\circ H)^*\Theta)\big\vert_{{\mathbb
F}_z}$, where $P$ is the projection in \eqref{P},
is identified with the line bundle
\begin{equation}\label{di}
((P\circ H)^*\Theta)\big\vert_{{\mathbb
F}_z}\,=\,
\det (\Pi_{2*} ({\mathcal W}\otimes \Pi^*_1 K^{1/2}_X))^*\otimes
\det (R^1 \Pi_{2*} ({\mathcal W}\otimes \Pi^*_1 K^{1/2}_X))
\end{equation}
(see \cite[Ch.~V, \S~6]{Ko} for the construction of determinant bundle). For any exact sequence of
coherent sheaves
$$
0\,\longrightarrow\, A_1 \,\longrightarrow\, A_2 \,\longrightarrow\, \ldots \,\longrightarrow\,
A_m \,\longrightarrow\, 0
$$
on a complex manifold $Y$, we have $\bigotimes_{i=1}^m (\det (A_i))^{(-1)^i}\,=\, {\mathcal O}_Y$
\cite[p.~165, Proposition (6.9)]{Ko}. Consequently, from
\eqref{les} and \eqref{di} we conclude that
$$
((P\circ H)^*\Theta)\big\vert_{{\mathbb F}_z}\,=\, L^*_0\, ,
$$
because both $\Pi_{2*} (\Pi^*_1 (W\otimes K^{1/2}_X))$ and
$R^1 \Pi_{2*} (\Pi^*_1 (W\otimes K^{1/2}_X))$ are trivial vector bundles.
In other words, the degree of the line bundle $(P\circ H)^*\Theta$ restricted
to ${\mathbb F}_z\,=\, P(W_x)$ is $1$.

Using the above properties of $(\xi,\, {\mathcal U})$ we are in a position to complete the
proof of the proposition.

We have
\begin{equation}\label{f1}
H^0({\mathcal M},\, P_*({\mathcal V}^0\otimes p^*K_X)\otimes\Theta)\,=\,
H^0(X\times {\mathcal M},\, {\mathcal V}^0\otimes (p^*K_X)\otimes (P^*\Theta))\, ,
\end{equation}
because $P_*({\mathcal V}^0\otimes (p^*K_X)\otimes (P^*\Theta))\,=\,
P_*({\mathcal V}^0\otimes p^*K_X)\otimes\Theta$ by the projection formula. Next
we have
$$
H^0(X\times {\mathcal M},\, {\mathcal V}^0\otimes (p^*K_X)\otimes (P^*\Theta))\, =\,
H^0({\mathbb P},\, T_H\otimes ((p\circ H)^*K_X)\otimes (P\circ H)^*\Theta)
$$
\begin{equation}\label{f2}
=\, 
H^0({\mathcal U},\, T_H\otimes ((p\circ H)^*K_X)\otimes (P\circ H)^*\Theta)\, ;
\end{equation}
the first equality follows from the fact that $H_*(T_H\otimes ((p\circ H)^*K_X)\otimes (P\circ H)^*\Theta)\,=\,
{\mathcal V}^0\otimes (p^*K_X)\otimes (P^*\Theta)$ (by the projection formula), and the second equality
follows from the fact that the codimension of the complement ${\mathbb P}\setminus \mathcal U$ is at least two.

As before, take a fiber ${\mathbb F}_z\,=\, \widehat{\xi}^{-1}(z)$ of the map $\widehat{\xi}$.
As shown above, ${\mathbb F}_z$ is identified with the projective space $P(W_x)$, and 
the restriction of $T_H$ (respectively, $(P\circ H)^*\Theta$)
to ${\mathbb F}_z$
is isomorphic to $T^*{\mathbb F}_z$ (respectively, ${\mathcal O}_{{\mathbb F}_z}(1)$);
note that the restriction of $(p\circ H)^*K_X$ to $\widehat{\xi}^{-1}(z)$ is a trivial
line bundle. Consequently, the
restriction of $T_H\otimes ((p\circ H)^*K_X)\otimes (P\circ H)^*\Theta$
to ${\mathbb F}_z$
is isomorphic to $T^*{\mathbb F}_z\otimes{\mathcal O}_{{\mathbb F}_z}(1)$.

Next we note that the holomorphic vector bundle $T^*{\mathbb F}_z\otimes
{\mathcal O}_{{\mathbb F}_z}(1)$ on the projective space
${\mathbb F}_z\,=\, P(W_x)$
is semistable of negative degree (its degree is $-1$), and hence the
vector bundle $T^*{\mathbb F}_z\otimes {\mathcal O}_{{\mathbb F}_z}(1)$
does not have any nonzero holomorphic section. This implies that
$$
H^0({\mathcal U},\, T_H\otimes ((p\circ H)^*K_X)\otimes (P\circ H)^*\Theta)\, =\, 0\, .
$$
Consequently, from \eqref{f2} and \eqref{f1} we now conclude that
$$
H^0({\mathcal M},\, P_*({\mathcal V}^0\otimes p^*K_X)\otimes\Theta)\,=\,0\, .
$$
This completes the proof of Proposition \ref{prop2}.
\end{proof}

We continue with the proof of Theorem \ref{thm2}.
Combining \eqref{e11} with Proposition \ref{prop2}, it follows that we are reduced to the trace component:
\begin{equation}\label{e12}
H^0({\mathcal M},\, (T^*{\mathcal M})\otimes\Theta)\, =\,
\{\Psi^*\omega\, \mid\, \omega\,\in\, H^0(J(X),\, T^*J(X))\}\,=\,
H^0(X,\, K_X)\, ,
\end{equation}
where $\Psi$ is the projection in \eqref{e10}.

Let
\begin{equation}\label{e13}
\Gamma'\, \in\, H^0(J(X),\, T^*J(X))\,=\, H^0(X,\, K_X)
\end{equation}
be the $1$--form corresponding to the section $\Gamma$ in Proposition \ref{prop1} for the
isomorphism in \eqref{e12}.

The proof of Theorem \ref{thm2} will be completed using the following lemma.

\begin{lemma}\label{lem2}
The $1$--form $\Gamma'$ on $J(X)$ in \eqref{e13} is invariant under the holomorphic involution
$$\iota_J\, :\, J(X)\, \longrightarrow\, J(X)$$ defined by $L\, \longmapsto\, L^*$.
\end{lemma}

\begin{proof}[{Proof of Lemma \ref{lem2}}]
Let $\iota_{\mathcal M}\, :\, {\mathcal M}\, \longrightarrow\,{\mathcal M}$
be the holomorphic involution defined by $E\, \longmapsto\, E^*$. Note that
$$
\iota_J\circ \Psi\,=\, \Psi\circ\iota_{\mathcal M}\, ,
$$
where $\Psi$ is constructed in \eqref{e10} and $\iota_J$ is defined in the
statement of the lemma. By Serre duality,
\begin{equation}\label{sdl}
H^k(X,\, E^*\otimes K^{1/2}_X)\,=\, H^{1-k}(X,\, E\otimes K^{1/2}_X)^*
\end{equation}
for $k\,=\, 0,\, 1$. This implies that the above involution $\iota_{\mathcal M}$ preserves
the divisor $D_\Theta$ defined in \eqref{td}. Since $D_\Theta$ is preserved by
$\iota_{\mathcal M}$, the involution $\iota_{\mathcal M}$ has a tautological lift to the
line bundle $\Theta\,=\, {\mathcal O}_{\mathcal M}(D_\Theta)$. Let
\begin{equation}\label{l1}
\iota_\Theta\, \, :\,\, \Theta\, \longrightarrow\, \Theta
\end{equation}
be the resulting involution of $\Theta$ over the involution $\iota_{\mathcal M}$ of $\mathcal M$.
This involution $\iota_\Theta$ of $\Theta$ produces a holomorphic
involution of the complex manifold ${\rm Conn}(\Theta)$ constructed in \eqref{e3}.
The involution of ${\rm Conn}(\Theta)$ constructed this way will be denoted by $\iota_{T}$. We note that
$$
{\iota}_{\mathcal M}\circ q\,=\, q\circ \iota_{T}\, ,
$$
where $q$ is the projection in \eqref{e3}. This implies that the involution $\iota_T$ preserves the open subset
${\rm Conn}(\Theta)^0\,=\, q^{-1}({\mathcal M}\setminus D_\Theta)$ in \eqref{c0c}.

The involution $\iota_T$ (respectively, ${\iota}_{\mathcal M}$) defines an action of
${\mathbb Z}/2\mathbb Z$ on ${\rm Conn}(\Theta)$ (respectively, ${\mathcal M}$).
The section $\tau$ in \eqref{c1c} is evidently ${\mathbb Z}/2\mathbb Z$--equivariant, for the actions
of ${\mathbb Z}/2\mathbb Z$ on ${\rm Conn}(\Theta)^0$ and ${\mathcal M}^0$.

For any $E\,\in\, \mathcal M$, the fiber $\Theta_E$ of $\Theta$ over $E$ is the line 
$\bigwedge^{\rm top}H^0(X,\, E\otimes K^{1/2}_X)^*\otimes \bigwedge^{\rm top}H^1(X,\, E\otimes 
K^{1/2}_X)$. Using \eqref{sdl} we get an isomorphism of $\Theta_E$ with the fiber $\Theta_{E^*}$. 
Also, the involution $\iota_\Theta$ of $\Theta$ in \eqref{l1} produces an isomorphism of $\Theta_E$ 
with $\Theta_{E^*}$. These two isomorphisms between $\Theta_E$ and $\Theta_{E^*}$ actually 
coincide.

The K\"ahler form $\omega_{\mathcal M}$ on $\mathcal M$ (see \eqref{kf}) is clearly 
preserved by the involution $\iota_{\mathcal M}$ of $\mathcal M$. From this it can be deduced that 
the section $\psi_Q$ in \eqref{e5} is ${\mathbb Z}/2\mathbb Z$--equivariant, for the above actions of 
${\mathbb Z}/2\mathbb Z$ on ${\rm Conn}(\Theta)$ and ${\mathcal M}$. Indeed, the section $\psi_Q$ 
corresponds to the unique Hermitian connection on $\Theta$ whose curvature is the K\"ahler form 
$\omega_{\mathcal M}$. In other words, the section $\psi_Q$ is uniquely determined by 
$\omega_{\mathcal M}$. Therefore, the section $\psi_Q$ in \eqref{e5} is
${\mathbb Z}/2\mathbb Z$--equivariant, because $\omega_{\mathcal M}$ is preserved
by $\iota_{\mathcal M}$.

Given a holomorphic connection $\nabla$ on a holomorphic vector bundle $E$, the dual
vector bundle $E^*$ is equipped with the dual connection $\nabla^*$. Therefore, we have
a holomorphic involution
$$
\iota_{\mathcal C}\, :\, {\mathcal C}\, \longrightarrow\, {\mathcal C}\, ,\ \
(E,\, \nabla)\, \longmapsto\, (E^*,\, \nabla^*)\, .
$$
The involution $\iota_{\mathcal C}$ gives an action of ${\mathbb Z}/2\mathbb Z$ on $\mathcal C$.
The projection $\varphi$ in \eqref{e1} is clearly ${\mathbb Z}/2\mathbb Z$--equivariant, for
the actions of ${\mathbb Z}/2\mathbb Z$ on ${\mathcal C}$ and ${\mathcal M}$.
In particular, $\iota_{\mathcal C}$ preserves the Zariski open subset
${\mathcal C}^0$ in \eqref{c0}. Since the dual of a unitary connection on $E$ is a unitary
connection on $E^*$, the section $\psi_U$ in \eqref{e2} is ${\mathbb Z}/2\mathbb Z$--equivariant, for
the actions of ${\mathbb Z}/2\mathbb Z$ on ${\mathcal C}$ and ${\mathcal M}$.

Let $\widehat{\iota}\, :\, X\times X\times {\mathcal M}\, \longrightarrow\, X\times X\times
{\mathcal M}$ be the holomorphic involution defined by $(x,\, y,\, E)\,\longmapsto\, (y,\, x,\,
\iota_{\mathcal M}(E))\,=\, (y,\, x,\, E^*)$. This involution naturally lifts to an involution
of the vector bundle ${\mathcal E}\otimes {\mathcal K}\otimes
q^*_{12}{\mathcal O}_{X\times X}(\Delta)$ in \eqref{cvb}. The earlier mentioned
involution $\iota_\Theta$ of the line bundle $\Theta$
produces an action of ${\mathbb Z}/2\mathbb Z$ on the pullback $J^*\Theta$,
where $J$ is the projection in \eqref{J}. These actions of ${\mathbb Z}/2\mathbb Z$ on
${\mathcal E}\otimes {\mathcal K}\otimes q^*_{12}{\mathcal O}_{X\times X}(\Delta)$ and
$J^*\Theta$ together produce an action of ${\mathbb Z}/2\mathbb Z$ on the tensor product
$$
{\mathcal E}\otimes {\mathcal K}\otimes (q^*_{12}{\mathcal O}_{X\times X}(\Delta))
\otimes J^*\Theta\, \longrightarrow\, X\times X\times{\mathcal M}\, .
$$
The section $1''_{\mathcal M}$ in \eqref{s3} of this tensor product is anti-invariant for the above
action of ${\mathbb Z}/2\mathbb Z$ on ${\mathcal E}\otimes {\mathcal K}\otimes (q^*_{12}{\mathcal
O}_{X\times X}(\Delta))\otimes J^*\Theta$ (meaning the nontrivial element of 
${\mathbb Z}/2\mathbb Z$ acts as multiplication by $-1$). From this it follows that the 
section $\phi$ in \eqref{c1} is ${\mathbb Z}/2\mathbb Z$--equivariant, for the actions of
${\mathbb Z}/2\mathbb Z$ on ${\mathcal C}^0$ and ${\mathcal M}^0$.

{}From all these it follows that $\Gamma_0$ (constructed in \eqref{ga0}) is ${\mathbb Z}/2\mathbb 
Z$--equivariant, for the actions of ${\mathbb Z}/2\mathbb Z$ on ${\mathcal C}^0$ and $T^*{\mathcal M}^0$; the 
action of ${\mathbb Z}/2\mathbb Z$ on $T^*{\mathcal M}^0$ is induced by the action of ${\mathbb Z}/2\mathbb Z$ 
on ${\mathcal M}^0$ constructed using the above involution $\iota_{\mathcal M}$. Since $\Gamma_0$ is ${\mathbb 
Z}/2\mathbb Z$--equivariant, it follows that $\Gamma$ in Proposition \ref{prop1} is ${\mathbb Z}/2{\mathbb 
Z}$--invariant for the action on $(T^*{\mathcal M})\otimes\Theta$ constructed using the actions of ${\mathbb 
Z}/2{\mathbb Z}$ on $T^*\mathcal M$ and $\Theta$ (given by $\iota_\Theta$ in \eqref{l1}). This immediately 
implies that $\Gamma'$ in \eqref{e13} is left invariant under the involution $\iota_J$ of $J(X)$. This completes 
the proof of Lemma \ref{lem2}.
\end{proof}

Continuing with the proof of Theorem \ref{thm2}, we note that $\iota^*_J\alpha\, =\,-\alpha$ for
all $\alpha\, \in\, H^0(J(X),\, T^*J(X))$, where $\iota_J$ is the involution
in Lemma \ref{lem2}. Hence from Lemma \ref{lem2} it follows immediately that 
$\Gamma'\,=\, 0$. In view of \eqref{e12}, this implies that $\Gamma$ in Proposition 
\ref{prop1} vanishes identically. Hence $\Gamma_0$ in \eqref{ga0} vanishes identically.
Therefore, we conclude that the restriction, to the 
open subset ${\mathcal C}^0\, \subset\, \mathcal C$, of the isomorphism $F$ in Theorem \ref{thm1}
coincides with the isomorphism $G$ in Lemma \ref{lem1}. This completes the proof
of Theorem \ref{thm2}.
\end{proof}

Theorem \ref{thm1} and Theorem \ref{thm2} together give the following:

\begin{corollary}\label{cor0}
The holomorphic isomorphism
$G\, :\, {\mathcal C}^0\, \longrightarrow\, {\rm Conn}(\Theta)^0$
in Lemma \ref{lem1} extends to a holomorphic isomorphism
$$G'\, :\, {\mathcal C}\, \stackrel{\sim}{\longrightarrow}\, {\rm Conn}(\Theta)\, .$$
\end{corollary}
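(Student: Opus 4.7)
The plan is to simply set $G' \,:=\, F$, where $F\, :\, {\mathcal C}\, \longrightarrow\, {\rm Conn}(\Theta)$ is the holomorphic isomorphism supplied by Theorem \ref{thm1}. There is essentially nothing to construct; the whole content of the corollary is a matter of recognizing that the two available isomorphisms agree where both are defined.

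First, I would record that $F$ is a holomorphic isomorphism over all of $\mathcal{M}$, by Theorem \ref{thm1}. In particular, $F$ is defined and biholomorphic over the open subset ${\mathcal C}^0\,=\,\varphi^{-1}({\mathcal M}\setminus D_\Theta)$. Next, I would invoke Theorem \ref{thm2}, which asserts exactly that the restriction $F\big\vert_{{\mathcal C}^0}$ coincides with the isomorphism $G\, :\, {\mathcal C}^0\, \longrightarrow\, {\rm Conn}(\Theta)^0$ constructed in Lemma \ref{lem1}. Combining these, the holomorphic isomorphism $G'\,:=\, F$ extends $G$ from ${\mathcal C}^0$ to all of $\mathcal{C}$ as a holomorphic isomorphism onto ${\rm Conn}(\Theta)$, which is precisely the statement of the corollary.

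There is no real obstacle: the work has already been done in Theorem \ref{thm2}. The only thing worth emphasizing is the logical point noted in Remark \ref{rem-i}: although $G$ was defined purely algebro-geometrically on ${\mathcal C}^0$ using the canonical holomorphic connection $\widehat{\beta}'_E$ and the canonical trivialization $\tau$ of $\Theta\big\vert_{{\mathcal M}^0}$, its extension $G'$ to $\mathcal{C}$ inherits from $F$ the compatibility with the $C^\infty$ sections $\psi_U$ and $\psi_Q$ on the full moduli space. Thus the extension is unique, since any two holomorphic maps ${\mathcal C}\, \longrightarrow\, {\rm Conn}(\Theta)$ that agree on the Zariski-dense open set ${\mathcal C}^0$ must coincide on $\mathcal{C}$.
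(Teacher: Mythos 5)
Your proposal is correct and is exactly the paper's own argument: define $G'=F$ and invoke Theorem \ref{thm2} to see that it extends $G$, with $F$ already known to be a holomorphic isomorphism over all of $\mathcal{M}$ by Theorem \ref{thm1}. The additional remarks on uniqueness of the extension are consistent with the paper but not needed.
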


\begin{proof}
Since $F$ in Theorem \ref{thm1} is a holomorphic isomorphism from
${\mathcal C}$ to ${\rm Conn}(\Theta)$, this follows from
Theorem \ref{thm2}.
\end{proof}

The isomorphism $G'$ in Corollary \ref{cor0} has the following property:

\begin{corollary}\label{cor2}
For the isomorphism $G'$ in Corollary \ref{cor0},
$$(G')^*\Phi_2\,=\, 2r\cdot\Phi_1\, ,$$
where $\Phi_1$ and $\Phi_2$ are the symplectic
forms in \eqref{sf} and \eqref{sf2} respectively.
\end{corollary}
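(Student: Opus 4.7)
The plan is to deduce the symplectic compatibility from Theorem \ref{thm1} by showing that the extended map $G'$ agrees globally with the isomorphism $F$ produced in that theorem.

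First I would observe that by Theorem \ref{thm2}, the restriction $F\big\vert_{{\mathcal C}^0}$ coincides with the isomorphism $G$ of Lemma \ref{lem1}. On the other hand, $G'$ from Corollary \ref{cor0} is, by construction, a holomorphic extension of $G$ to all of ${\mathcal C}$. Thus $F$ and $G'$ are two holomorphic maps from ${\mathcal C}$ to ${\rm Conn}(\Theta)$ which agree on the Zariski open subset ${\mathcal C}^0$. Since ${\mathcal C}^0\,=\,\varphi^{-1}({\mathcal M}\setminus D_\Theta)$ is dense in the smooth complex manifold ${\mathcal C}$, and both $F$ and $G'$ are holomorphic, they must coincide on all of ${\mathcal C}$; that is, $G'\,=\,F$.

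Having identified $G'$ with $F$, the statement $(G')^*\Phi_2\,=\,2r\cdot\Phi_1$ is immediate from the last assertion of Theorem \ref{thm1}, which records precisely that $F^*\Phi_2\,=\, 2r\cdot\Phi_1$.

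There is no real obstacle here; the only point worth verifying carefully is the density of ${\mathcal C}^0$ in ${\mathcal C}$, which follows from the fact that $D_\Theta\,\subset\,\mathcal M$ is a (proper) divisor, hence a Zariski closed subset of complex codimension one in the irreducible smooth variety ${\mathcal M}$, and the torsor projection $\varphi\,:\,{\mathcal C}\,\to\,{\mathcal M}$ is surjective with irreducible fibers, so ${\mathcal C}^0\,=\,\varphi^{-1}({\mathcal M}\setminus D_\Theta)$ is Zariski open and dense in ${\mathcal C}$. Once this is noted, identity of holomorphic maps on a dense open set forces $G'=F$ everywhere, and Corollary \ref{cor2} follows.
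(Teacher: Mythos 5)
Your proposal is correct and follows essentially the same route as the paper: the paper's proof of Corollary \ref{cor2} simply notes that $G'\,=\,F$ (which is how $G'$ is obtained in Corollary \ref{cor0}, as $F$ is the holomorphic extension of $G$ guaranteed by Theorem \ref{thm2}) and then invokes $F^*\Phi_2\,=\,2r\cdot\Phi_1$ from Theorem \ref{thm1}. Your additional verification that the extension is unique because ${\mathcal C}^0$ is dense is a harmless elaboration of the same argument.
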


\begin{proof}
The final part of Theorem \ref{thm1} says that $F^*\Phi_2\,=\, 2r\cdot\Phi_1$. Since
$F\,=\, G'$, the result follows from this.
\end{proof}

\begin{corollary}\label{cor1}
The image of the section $\phi$ in \eqref{c1} is a Lagrangian submanifold of
${\mathcal C}^0$ equipped with the symplectic form
$\Phi_1\big\vert_{{\mathcal C}^0}$ in \eqref{sf}.
\end{corollary}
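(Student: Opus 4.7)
The plan is to transfer the Lagrangian condition across the isomorphism $G'\,=\, F$ of Corollary \ref{cor0} and reduce to a short curvature computation on ${\rm Conn}(\Theta)^0$. Since ${\mathcal C}^0$ is a torsor over ${\mathcal M}^0$ for $T^*{\mathcal M}^0$, one has $\dim_{\mathbb C}{\mathcal C}^0\,=\, 2\dim_{\mathbb C} {\mathcal M}^0$, so the image of the holomorphic section $\phi$, being biholomorphic to ${\mathcal M}^0$ via $\widehat{\varphi}$, is a complex submanifold of complex dimension equal to half of $\dim_{\mathbb C}{\mathcal C}^0$. Consequently, it is enough to prove the isotropy condition
$$
\phi^*\bigl(\Phi_1\big\vert_{{\mathcal C}^0}\bigr)\,=\, 0\, .
$$

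For this, I would combine Corollary \ref{cor2} with the second defining property of $G$ in Lemma \ref{lem1}, namely $G\circ\phi\,=\, \tau$ (and the equality $G\,=\, G'\big\vert_{{\mathcal C}^0}$ coming from Corollary \ref{cor0}). These together give
$$
2r\cdot \phi^*\Phi_1 \,=\, \phi^*\bigl((G')^*\Phi_2\bigr) \,=\, (G\circ\phi)^*\Phi_2 \,=\, \tau^*\Phi_2\, ,
$$
so it suffices to show that $\tau^*\Phi_2\,=\, 0$.

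The last step is essentially a definitional check. By construction $\Phi_2$ is the curvature of the tautological holomorphic connection on $q^*\Theta \,\longrightarrow\, {\rm Conn}(\Theta)$, and for any holomorphic section $s$ of $q$ over an open subset $U\, \subset\, {\mathcal M}$ the pullback of this tautological connection by $s$ is precisely the holomorphic connection on $\Theta\big\vert_U$ classified by $s$. Since curvature commutes with pullback of connections, $s^*\Phi_2$ equals the curvature of that classified connection. Applied to $\tau$, the relevant holomorphic connection on $\Theta\big\vert_{{\mathcal M}^0}\,\cong\,{\mathcal O}_{{\mathcal M}^0}$ is the de Rham differential, which is flat. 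Hence $\tau^*\Phi_2\,=\, 0$, and the corollary follows.

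Given the machinery already developed, there is no substantial obstacle here; the only non-formal ingredient is the naturality statement identifying $s^*\Phi_2$ with the curvature of the connection corresponding to $s$, which is the standard property of the tautological connection (used also in establishing $\Phi_2$ in \eqref{sf2}). The whole argument is a short reduction once Corollary \ref{cor0} and Corollary \ref{cor2} are available.
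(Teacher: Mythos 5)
Your proposal is correct and follows essentially the same route as the paper: transfer the problem across $G'$ using Corollary \ref{cor2} and the identity $G\circ\phi=\tau$, then observe that $\tau$ classifies the trivial (flat) connection on $\Theta\big\vert_{{\mathcal M}^0}$, so $\tau^*\Phi_2=0$. The only difference is that you make explicit the half-dimension count and the naturality of the tautological connection under pullback by sections, both of which the paper leaves implicit.
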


\begin{proof}
In Corollary \ref{cor2} we saw that $G'$ is symplectic structure preserving 
(up to the factor $2r$). The image of the section $\tau$ in \eqref{c1c} is clearly a Lagrangian 
submanifold of $\text{Conn}(\Theta)^0$ with respect to the symplectic form $\Phi_2\big\vert_{{\rm 
Conn} (\Theta)^0}$ in \eqref{sf2} (the trivial connection is flat). Since $G(\phi({\mathcal M}^0))\,
=\, \tau({\mathcal M}^0)$, and $G$ is symplectic structure preserving up to the factor $2r$, from the 
above observation --- that the image of $\tau$ is a Lagrangian submanifold of $\text{Conn}(\Theta)^0$ 
with respect to the symplectic form $\Phi_2\big\vert_{{\rm Conn} (\Theta)^0}$ --- it follows 
immediately that $\phi({\mathcal M}^0)$ is a Lagrangian submanifold of ${\mathcal C}^0$ with 
respect to the symplectic form $\Phi_1\big\vert_{{\mathcal C}^0}$ in \eqref{sf}.
\end{proof}

\section{Family of Riemann surfaces}

Let $\mathcal T$ be a connected complex manifold, and let
$$F\, :\, {\mathcal X}_{\mathcal T}\, \longrightarrow\, \mathcal T$$
be a holomorphic family of compact connected Riemann
surfaces of genus $g$, with $g\, \geq\, 2$, parametrized by
$\mathcal T$, and equipped with a theta characteristic ${\mathbb L}$.
This means that ${\mathbb L}$ is a holomorphic line bundle over
${\mathcal X}_{\mathcal T}$, and there is a given holomorphic isomorphism
$$
I\, :\, {\mathbb L}\otimes {\mathbb L}\, \longrightarrow\, K_F\, ,
$$
where $K_F\, \longrightarrow\, {\mathcal X}_{\mathcal T}$ is the relative holomorphic cotangent bundle for the
project $F$; in other words, $K_F$ is the cokernel of the dual of the differential $dF$
$$(dF)^*\, :\, F^*T^*{\mathcal T}\, \longrightarrow\, T^*{\mathcal X}_{\mathcal T}\, .$$
For each point $t\, \in\,{\mathcal T}$, the compact Riemann surface $F^{-1}(t)$ will be denoted by
${\mathcal X}_t$. The holomorphic line bundle ${\mathbb L}\big\vert_{{\mathcal X}_t}$ on ${\mathcal X}_t$
will be denoted by ${\mathbb L}_t$.

Let
$$
\gamma\, :\, {\mathcal M}_{\mathcal T}\, \longrightarrow\, {\mathcal T}
$$
be the relative moduli space of stable vector bundles of rank $r$ and degree zero. So for any
$t\, \in\, \mathcal T$, the fiber $\gamma^{-1}(t)$ is the moduli space of stable vector bundles on
${\mathcal X}_t$ of rank $r$ and degree zero. Let
$$
\Theta_{\mathcal T}\, \longrightarrow\, {\mathcal M}_{\mathcal T}
$$
be the relative theta bundle constructed using the relative theta 
characteristic $\mathbb L$. So $\Theta_{\mathcal T}$ corresponds to the 
reduced effective divisor on ${\mathcal M}_{\mathcal T}$ defined by all $(t,\, E)$, 
where $t\, \in\, {\mathcal T}$ and $E\, \in\, \gamma^{-1}(t)$, such that 
$H^0({\mathcal X}_t,\, E\otimes {\mathbb L}_t)\, \not=\, 0$.

Let
\begin{equation}\label{hfb1}
q_{\mathcal T}\, :\, \text{Conn}^r(\Theta_{\mathcal T}) \, \longrightarrow\, 
{\mathcal M}_{\mathcal T}
\end{equation}
be the holomorphic fiber bundle over ${\mathcal M}_{\mathcal T}$ defined by the sheaf of
relative holomorphic connections
on $\Theta_{\mathcal T}$. So for any $t\, \in\, \mathcal T$, the fiber $(q_{\mathcal T})^{-1}(t)$ is
$\text{Conn}(\Theta)$ in \eqref{e3} for $X\,=\, {\mathcal X}_t$. The holomorphic
fiber bundle in \eqref{hfb1} has a $C^\infty$ section
\begin{equation}\label{hpq}
\widehat{\psi}_Q\, :\, {\mathcal M}_{\mathcal T}\,\longrightarrow\,
\text{Conn}^r(\Theta_{\mathcal T})
\end{equation}
given by the Chern connection associated to the Quillen metric on $\Theta_{\mathcal T}$
\cite{Qu}; so for each $t\, \in\, \mathcal T$, the restriction of $\widehat{\psi}_Q$ to
$\gamma^{-1}(t)$ is the section $\psi_Q$ in \eqref{e5} for the Riemann surface $X\,=\, {\mathcal X}_t$.

Let
\begin{equation}\label{hfb2}
{\varphi}_{\mathcal T}\, :\, {\mathcal C}_{\mathcal T} \,
\longrightarrow\, {\mathcal M}_{\mathcal T}
\end{equation}
be the moduli space of relative holomorphic connections; the fiber of
${\varphi}_{\mathcal T}$ over any $(t,\, E)$, where $t\, \in\, {\mathcal T}$
and $E\, \in\, \gamma^{-1}(t)$, is the space of all holomorphic connections
on the stable vector bundle $E\,\longrightarrow\, {\mathcal X}_t$, in particular,
this fiber is an affine space for $H^0({\mathcal X}_t,\, \text{End}(E)
\otimes T^*{\mathcal X}_t)$.

The holomorphic fiber bundle in \eqref{hfb2} has a $C^\infty$ section
\begin{equation}\label{hpu}
\widehat{\psi}_U\,\, :\,\, {\mathcal M}_{\mathcal T}\,\longrightarrow\,
{\mathcal C}_{\mathcal T}
\end{equation}
that sends any stable vector bundle of degree zero to the unique
holomorphic connection on it whose monodromy is unitary; so for each
$t\, \in\, \mathcal T$, the restriction of $\widehat{\psi}_U$ to $\gamma^{-1}(t)$
is the section $\psi_U$ in \eqref{e2} for the Riemann surface $X\,=\, {\mathcal X}_t$.

For each $t\, \in\, {\mathcal T}$, there is a natural holomorphic isomorphism
$$
F_t\,\, :\,\, (\gamma\circ{\varphi}_{\mathcal T})^{-1}(t)
\, \longrightarrow\,(\gamma\circ q_{\mathcal T})^{-1}(t)
$$
(${\varphi}_{\mathcal T}$ and $q_{\mathcal T}$ are the projections in
\eqref{hfb2} and \eqref{hfb1} respectively)
that takes the image of the section $\widehat{\psi}_U$ (constructed in
\eqref{hpu}) to the image of the section $\widehat{\psi}_Q$ constructed
in \eqref{hpq} (see Theorem \ref{thm1}). These isomorphisms $\{F_t\}_{t\in
\mathcal T}$ together define a $C^\infty$ isomorphism
\begin{equation}\label{wft}
\widehat{F}\,\, :\,\, {\mathcal C}_{\mathcal T}\, \longrightarrow\,
\text{Conn}^r(\Theta_{\mathcal T})\, ;
\end{equation}
the restriction of $\widehat{F}$ to $(\gamma\circ{\varphi}_{\mathcal T})^{-1}(t)$
is the above holomorphic isomorphism $F_t$ for every $t\,\in\,\mathcal T$.

\begin{proposition}\label{prop3}
The $C^\infty$ isomorphism $\widehat{F}$ in \eqref{wft} is holomorphic.
\end{proposition}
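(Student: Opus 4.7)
The plan is to leverage Theorem \ref{thm2}, which identifies each fiberwise isomorphism $F_t$ with the algebro-geometrically constructed $G'_t$ of Corollary \ref{cor0}. Because the definition of $G$ in Lemma \ref{lem1}, via the canonical connection $\widehat{\beta}'_E$ of Section \ref{se3.1} and the tautological trivialization of $\Theta$ off $D_\Theta$, is purely algebro-geometric, it should admit a relative counterpart that produces a holomorphic isomorphism $\widehat{G}$ over the locus avoiding the relative theta divisor. A continuity argument for $\bar{\partial}$ will then propagate holomorphicity of $\widehat{F}$ across that divisor.

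First I would construct the relative analogues of the sections $\phi$ and $\tau$. Let $D_{\Theta_{\mathcal T}}\,\subset\,{\mathcal M}_{\mathcal T}$ denote the relative theta divisor, defined fiberwise by $H^0({\mathcal X}_t,\,E\otimes{\mathbb L}_t)\,\not=\,0$, and let ${\mathcal M}^0_{\mathcal T}$, ${\mathcal C}^0_{\mathcal T}$ and $\text{Conn}^r(\Theta_{\mathcal T})^0$ be the respective complements of its preimages. On ${\mathcal X}_{\mathcal T}\times_{\mathcal T}{\mathcal X}_{\mathcal T}$, with relative diagonal $\Delta_{\mathcal T}$, the line bundle $(p^*_1{\mathbb L})\otimes(p^*_2{\mathbb L})\otimes{\mathcal O}(\Delta_{\mathcal T})$ admits, via the Poincar\'e adjunction formula as in \eqref{se}, a canonical relative trivialization on $2\Delta_{\mathcal T}$. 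The cohomology vanishing \eqref{cv} holds fiberwise over ${\mathcal M}^0_{\mathcal T}$, so cohomology and base change give a relative isomorphism analogous to $h_E$ in \eqref{e6}; the identity endomorphism then pulls back to a holomorphic relative section whose restriction to $2\Delta_{\mathcal T}$ defines a holomorphic relative connection and thus a holomorphic section
$$
\widehat{\phi}\,\, :\,\, {\mathcal M}^0_{\mathcal T}\,\longrightarrow\,{\mathcal C}^0_{\mathcal T}
$$
restricting fiberwise to $\phi$ of \eqref{c1}. Similarly, the tautological trivialization of $\Theta_{\mathcal T}$ over ${\mathcal M}^0_{\mathcal T}$ defines a holomorphic section $\widehat{\tau}$ of $q_{\mathcal T}$ restricting fiberwise to $\tau$ of \eqref{c1c}.

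Next, using $\widehat{\phi}$ and $\widehat{\tau}$, the recipe of Lemma \ref{lem1} produces a holomorphic isomorphism
$$
\widehat{G}\,\, :\,\, {\mathcal C}^0_{\mathcal T}\,\longrightarrow\,\text{Conn}^r(\Theta_{\mathcal T})^0
$$
of relative $T^*({\mathcal M}^0_{\mathcal T}/{\mathcal T})$-torsors. By Theorem \ref{thm2} applied on each fiber ${\mathcal X}_t$, the restriction of $\widehat{F}$ to ${\mathcal C}^0_{\mathcal T}$ agrees with $\widehat{G}$, so $\widehat{F}\big\vert_{{\mathcal C}^0_{\mathcal T}}$ is holomorphic on the Zariski open dense subset ${\mathcal C}^0_{\mathcal T}\,\subset\,{\mathcal C}_{\mathcal T}$.

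To extend holomorphicity across the preimage of the relative theta divisor, note that $\widehat{F}$ is by construction $C^\infty$ on all of ${\mathcal C}_{\mathcal T}$, so $\bar{\partial}\widehat{F}$ is a continuous form. It vanishes identically on the open dense subset ${\mathcal C}^0_{\mathcal T}$, hence by continuity on the whole of ${\mathcal C}_{\mathcal T}$, proving that $\widehat{F}$ is holomorphic. The main obstacle I anticipate is the cohomology-and-base-change step for $\widehat{\beta}_E$: absent a Poincar\'e bundle on ${\mathcal X}_{\mathcal T}\times_{\mathcal T}{\mathcal M}_{\mathcal T}$, one must carry out the construction on the relative triple product ${\mathcal X}_{\mathcal T}\times_{\mathcal T}{\mathcal X}_{\mathcal T}\times_{\mathcal T}{\mathcal M}_{\mathcal T}$, following the global construction of $\phi$ via the section $1''_{\mathcal M}$ of \eqref{s3} in the proof of Proposition \ref{prop1}; this adapts to the family without essential change, but requires verifying that the analogue of $R^1J_*({\mathcal E}\otimes{\mathcal K})$ is uniformly of the expected rank along $D_{\Theta_{\mathcal T}}$ in the parameter direction.
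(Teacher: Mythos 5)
Your proposal is correct and follows essentially the same route as the paper: build the relative versions of the sections $\phi$ and $\tau$ (which are algebro-geometric and hence move holomorphically with $t$), obtain the relative isomorphism $\widehat{G}$ from the recipe of Lemma \ref{lem1}, and invoke Theorem \ref{thm2} fiberwise to identify it with $\widehat{F}$. The only cosmetic difference is in the last step, where the paper extends $G'$ holomorphically across the theta divisor before comparing with $\widehat{F}$, while you compare on the complement and then use continuity of $\bar{\partial}\widehat{F}$; both are valid.
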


\begin{proof}
For every $t\, \in\, \mathcal T$, consider the holomorphic isomorphism
$$
G'_t\,\, :\,\, (\gamma\circ{\varphi}_{\mathcal T})^{-1}(t)
\, \longrightarrow\,(\gamma\circ q_{\mathcal T})^{-1}(t)
$$
in Corollary \ref{cor0}; so $G'_t$ is $G'$ in Corollary \ref{cor0}
for ${\mathcal X}_t\,=\, X$. These isomorphisms combine together to define an isomorphism
$$
\widehat{G}'\,\, :\,\, {\mathcal C}_{\mathcal T}\, \longrightarrow\,
\text{Conn}^r(\Theta_{\mathcal T})\, ;
$$
the restriction of $\widehat{G}'$ to $(\gamma\circ{\varphi}_{\mathcal T})^{-1}
(t)$ is the above holomorphic isomorphism $G'_t$ for every $t\,\in\,\mathcal T$. From
the construction of the isomorphism $G$ in Lemma \ref{lem1} it follows immediately
that $G$ depends holomorphically on the Riemann surface $X$. Note that both the sections
$\phi$ and $\tau$, constructed in \eqref{c1} and \eqref{c1c} respectively, depend holomorphically
on the Riemann surface. Therefore, its extension
$G'$ also depends holomorphically on the Riemann surface $X$. Consequently, the
above isomorphism $\widehat{G}'$ is holomorphic.

Now, Theorem \ref{thm2} implies that $\widehat{G}'$ coincides with $\widehat{F}$ in \eqref{wft}.
Hence the map $\widehat{F}$ is holomorphic.
\end{proof}

\section*{Acknowledgements}

The first-named author thanks Centre de Recherches Math\'ematiques, Montreal, for hospitality. He is partially 
supported by a J. C. Bose Fellowship. The second-named author is supported by an NSERC Discovery grant.



\begin{thebibliography}{AAAA}

\bibitem[At]{At} M. F. Atiyah, Complex analytic connections in fibre
bundles, \textit{Trans. Amer. Math. Soc.} \textbf{85} (1957), 181--207.

\bibitem[AB]{AB} M. F. Atiyah and R. Bott, The Yang-Mills equations over Riemann
surfaces, {\it Philos. Trans. Roy. Soc. London Ser. A} {\bf 308} (1983), 523--615.

\bibitem[BNR]{BNR} A. Beauville, M. S. Narasimhan and S. Ramanan, Spectral curves and
the generalised theta divisor, {\it Jour. Reine Angew. Math.} {\bf 398} (1989), 169--179.

\bibitem[BB]{BB} D. Ben-Zvi and I. Biswas, Theta functions and Szeg\H{o} kernels,
{\it Int. Math. Res. Not.} (2003), no. 24, 1305--1340.

\bibitem[BGS1]{BGS1} J.-M. Bismut, H. Gillet and C. Soul\'e, Analytic torsion and holomorphic
determinant bundles. I. Bott-Chern forms and analytic torsion, {\it Comm. Math. Phys.}
{\bf 115} (1988), 49--78.

\bibitem[BGS2]{BGS2} J.-M. Bismut, H. Gillet and C. Soul\'e, Analytic torsion and
holomorphic determinant bundles. II. Direct images and Bott-Chern forms, {\it Comm. Math. Phys.}
{\bf 115} (1988), 79--126.

\bibitem[BGS3]{BGS3} J.-M. Bismut, H. Gillet and C. Soul\'e, Analytic torsion and holomorphic
determinant bundles. III. Quillen metrics on holomorphic determinants, {\it Comm. Math. Phys.}
{\bf 115} (1988), 301-351.

\bibitem[Bi1]{Bi1} I. Biswas, Infinitesimal deformations of the tangent bundle of a
moduli space of vector bundles over a curve, {\it Osaka Jour. Math.} {\bf 43} (2006),
263--274. 

\bibitem[Bi2]{Bi2} I. Biswas, On the moduli space of holomorphic $G$-connections on a compact
Riemann surface, {\it Euro. Jour. Math.} {\bf 6} (2020), 321--335.

\bibitem[BH]{BH} I. Biswas and J. Hurtubise, Meromorphic
connections, determinant line bundles and the Tyurin parametrization, arXiv:1907.00133,
\textit{Asian Jour. Math.} (to appear).

\bibitem[BHS]{BHS} I. Biswas, J. Hurtubise and J. Stasheff, A construction of a 
universal connection, {\it Forum Math.} {\bf 24} (2012), 365--378.

\bibitem[De]{De} P. Deligne, {\it \'Equations diff\'erentielles \`a points
singuliers r\'eguliers}, Lecture Notes in Mathematics, Vol. 163, Springer-Verlag,
Berlin-New York, 1970.

\bibitem[Go]{Go} W. M. Goldman, The symplectic nature of fundamental groups of surfaces, {\it Adv.
in Math.} {\bf 54} (1984), 200--225.

\bibitem[Ko]{Ko} S. Kobayashi, \textit{Differential geometry
of complex vector bundles}, Princeton University Press, Princeton,
NJ, Iwanami Shoten, Tokyo, 1987.

\bibitem[La]{La} Y. Laszlo, Un th\'eor\`eme de Riemann pour les diviseurs th\^eta sur les espaces
de modules de fibr\'es stables sur une courbe, {\it Duke Math. Jour.} {\bf 64} (1991), 333--347.

\bibitem[Ma]{Ma} B. Malgrange, 
 D\'eformations isomonodromiques, forme de Liouville, fonction tau,
{\it Ann. Institut Fourier} {\bf 54}, (2004), 1371--1392.

\bibitem[NR1]{NR1} M. S. Narasimhan and S. Ramanan, Deformations of the moduli space of 
vector bundles over an algebraic curve, {\it Ann. of Math.} {\bf 101} (1975), 391--417.

\bibitem[NR2]{NR2} M. S. Narasimhan and S. Ramanan, Geometry of Hecke cycles. I,
{\it C. P. Ramanujam--a tribute}, pp. 291--345, Tata Inst. Fund. Res. Studies in Math., 8,
Springer, Berlin-New York, 1978. 

\bibitem[NS]{NS} M. S. Narasimhan and C. S. Seshadri, Stable and unitary vector
bundles on a compact {R}iemann surface, {\em Ann. of Math.} {\bf 82} (1965), 540--567.

\bibitem[Ne]{Ne} P. E. Newstead, {\it Introduction to moduli problems and orbit spaces},
Tata Institute of Fundamental Research Lectures on Mathematics and Physics, 51, Narosa 
Publishing House, New Delhi, 1978.

\bibitem[Qu]{Qu} D. G. Quillen, Determinants of Cauchy-Riemann operators on
Riemann surfaces, \textit{Funct. Anal. Appl.} \textbf{19} (1985), 37--41.

\bibitem[Ra]{Ra} S. Ramanan, The moduli spaces of vector bundles over an algebraic curve,
{\it Math. Ann.} \textbf{200} (1973), 69--84.

\bibitem[Si1]{Si1} C. T. Simpson, Moduli of representations of the fundamental group of a
smooth projective variety. I, {\it Inst. Hautes \'Etudes Sci. Publ. Math.} {\bf 80} (1994),
5--79.

\bibitem[Si2]{Si2} C. T. Simpson, Moduli of representations of the fundamental group of a
smooth projective variety. II, {\it Inst. Hautes \'Etudes Sci. Publ. Math.}
{\bf 79} (1994), 47--129.

\bibitem[We]{We} A. Weil, G\'en\'eralisation des fonctions ab\'eliennes, {\it Jour. Math.
Pures Appl.} {\bf 17} (1938), 47--87.

\end{thebibliography}
\end{document}